\documentclass[12pt, reqno]{amsart}

\usepackage[top=25mm, bottom=20mm, left=35mm, right=35mm]{geometry}

\usepackage{amsmath, amsthm, amssymb}
\usepackage{hyperref}
\usepackage{enumerate}
\usepackage{url}

\usepackage{xcolor}

\usepackage{fancyvrb}

\newtheorem{thm}{Theorem}[section]
\newtheorem{lemma}[thm]{Lemma}
\newtheorem{prop}[thm]{Proposition}

\newtheorem{conj}[thm]{Conjecture}

\theoremstyle{definition}
\newtheorem{defn}[thm]{Definition}

\theoremstyle{remark}
\newtheorem{remark}[thm]{Remark}

\numberwithin{equation}{section}

\def\alp{{\alpha}}

\def\del{{\delta}} \def\Del{{\Delta}}
 
\def\tet{{\theta}}  
\def\kap{{\kappa}}

\def\ome{{\omega}}

\def \le {\leqslant} \def\ge{\geqslant}

\def \bE {\mathbb E}
\def \bF {\mathbb F}

\def \bN {\mathbb N}

\def \bR {\mathbb R}
\def \bZ {\mathbb Z}

\def \bb {\mathbf b}
\def \bc {\mathbf c}

\def \bx {\mathbf x}

\def \bbet {{\boldsymbol{\beta}}}

\def \fG {\mathfrak G}

\def \cA {\mathcal A}

\def \cH {\mathcal H}

\def \cM {\mathcal M}

\def \cR {\mathcal R}

\def \cU {\mathcal U}
\def \cV {\mathcal V}

\def \TC {{\mathrm{TC}}}

\renewcommand{\geq}{\ge}
\renewcommand{\leq}{\le}

\begin{document}
\title[Strong Van der Waerden via Chowla]{The strong form of Van der Waerden's conjecture via twisted Chowla}

\subjclass[2020]{11R32 (primary); 11C08, 11D45, 11N36, 11R58 (secondary)}

\keywords{arithmetic statistics, Galois theory, square sieve, Chowla's conjecture, function fields}

\author[Anderson, Chow, Dietmann, Hokken, Koymans, Lemke Oliver]{Theresa C. Anderson \and Sam Chow \and Rainer Dietmann \and David Hokken \and Peter Koymans \and Robert J. Lemke Oliver}

\address{Department of Mathematical Sciences, Carnegie Mellon University: Wean Hall, Hammerschlag Dr., Pittsburgh, PA 15213, USA}
\email{tanders2@andrew.cmu.edu}

\address{Mathematics Institute, Zeeman Building, University of Warwick, Coventry CV4 7AL, United Kingdom}
\email{Sam.Chow@warwick.ac.uk}

\address{Department of Mathematics, Royal Holloway, University of London\\
Egham TW20 0EX, United Kingdom}
\email{Rainer.Dietmann@rhul.ac.uk}

\address{Mathematisch Instituut, Postbus 80.010, 3508 TA Utrecht, Netherlands}
\email{d.p.t.hokken@uu.nl}

\address{Mathematisch Instituut, Postbus 80.010, 3508 TA Utrecht, Netherlands}
\email{p.h.koymans@uu.nl}

\address{Department of Mathematics, University of Wisconsin-Madison, Madison, WI 53706, USA}
\email{lemkeoliver@wisc.edu}

\begin{abstract} 
Determining the properties of a random polynomial has fuelled significant investigation over the past century.  One driving force of this research is a 1936 paper of Van der Waerden. Fix $n \ge 3$ and let $E_n(B)$ be the number of monic, irreducible, non-$S_n$ polynomials $f = X^n + a_1 X^{n-1} + \cdots + a_n$ with \mbox{$|a_j| \leq B$} for all $j$.
A recent breakthrough of Bhargava bounds \mbox{$E_n(B) \ll B^{n-1}$.}  This spectacularly resolves a conjecture of Van der Waerden, but leaves open its stronger form, namely that $E_n(B) = o(B^{n-1})$.
Inspired by recent progress, we now address this strong form.

Bhargava's result, together with work of Chow and Dietmann, essentially reduces the strong Van der Waerden conjecture to the claim that the number of polynomials $f$ with Galois group $A_n$ is $o(B^{n-1})$. Assuming a twisted function field version of Chowla’s conjecture, we prove this claim. This not only connects two active and challenging areas of research, but also conditionally resolves the strong Van der Waerden conjecture for all $n \ge 7$. 

Our proof is based on a variant of Heath-Brown and Pierce’s square sieve and $q$-van der Corput differencing. Our methods also apply to the analogous problem of counting square discriminants of polynomials that are not necessarily monic.  In addition to describing our new contributions, we briefly elaborate on the various conjectures appearing in Van der Waerden's paper and some of the exciting recent work of others in this area of arithmetic statistics.

\end{abstract}

\maketitle

\section{Introduction}

\subsection{Galois groups of random polynomials}

Fix an integer $n \ge 3$. Given $f(X) \in \bZ[X]$ of degree $n$, its Galois group $G_f$ is the automorphism group of the splitting field of $f$. If $f$ is separable, then $G_f$ acts on the $n$ many distinct roots of $f$. Thus, up to conjugacy, we may regard $G_f$ as a subgroup of $S_n$. Let $B \geq 1$. It follows from the Hilbert irreducibility theorem that among polynomials 
\begin{equation}
\label{monic}
f(X) = X^n + a_1X^{n-1} + \cdots + a_n \in \bZ[X]
\end{equation}
with all $|a_j| \le B$, almost all are irreducible with $G_f \cong S_n$ as $B \to \infty$.  

In a 1936 paper, Van der Waerden \cite{vdW1936} was the first to consider quantitative forms of this theorem, referring to polynomials $f$ such that $G_f \not\cong S_n$ as `polynomials with affect'.  He obtained bounds both on the number of reducible polynomials and on the number of irreducible polynomials with affect. Van der Waerden then notes that a polynomial of degree $2$ has affect if and only if it is reducible, and that the count of such is $O(B/\log{B})$. He then writes:

\begin{quote}
    \em ``Ich vermute, da{\ss} die H\"aufigkeit der Polynome mit Affekt auch f\"ur $n>2$ im wesentlichen gleich der H\"aufigkeit der reduziblen Polynome ist.  Es scheint n\"amlich, da{\ss} die irreduziblen Polynome mit Affekt noch erheblich seltener sind als die reduziblen Polynome.  Diese Vermutung wird best\"atigt durch die heuristische Betrachtung des folgenden Paragraphen, welche es plausibel macht, da{\ss} die H\"aufigkeit der Gleichungen 3. Grades mit alternierender Gruppe $O(N^{-2+\epsilon})$ ist.''
\end{quote}
Translated to English, this reads:
\begin{quote}
    \em ``I suspect that the frequency of polynomials with affect is, also for $n > 2$, essentially the same as the frequency of reducible polynomials.  Indeed, it seems that irreducible polynomials with affect are significantly rarer than reducible polynomials. This conjecture is supported by the heuristic consideration of the following paragraph, which makes it plausible that the frequency of cubic equations with alternating group is $O(N^{-2+\epsilon})$.''
\end{quote}
From this, it is apparent that Van der Waerden believed two things:
\begin{enumerate}
    \item The count of polynomials with $G_f \not\cong S_n$ should be dominated by the number of reducible polynomials\footnote{Strictly speaking, Van der Waerden proved the analogous result for non-monic polynomials and used the variable $N$ in place of our $B$.  The first discrepancy is almost as immaterial as the second!}, which he showed to be $\asymp B^{n-1}$.
    \item The number of irreducible polynomials with $G_f \not\cong S_n$ should be little-o of the number of reducible polynomials, i.e. $o(B^{n-1})$, and likely with a power saving.
\end{enumerate}

In work published in 2025, Bhargava \cite{Bha2025} spectacularly showed that the first of these assertions is correct: the error term in the form of the Hilbert irreducibility theorem above is $O(B^{n-1})$.  Bhargava's methods also naturally bear on the second assertion, even if they do not resolve it. This second assertion, which we refer to as the strong form of Van der Waerden's conjecture, is the main focus of this paper, so we state it precisely.

\begin{conj}[The strong form of Van der Waerden's conjecture]
\label{vdW}
Let $n \ge 3$ be an integer. For any $B \geq 1$, let $E_n(B)$ be the number of irreducible polynomials
    \[
        f(X)
            = X^n + a_1X^{n-1} + \dots + a_n \in \bZ[X]
    \]
such that $|a_j| \leq B$ for each $1 \leq j \leq n$ and such that $G_f \not\cong S_n$.  Then $$E_n(B) = o(B^{n-1})$$
as $B \to \infty$.
\end{conj}

Our main theorem, Theorem~\ref{MainThm} below, establishes Conjecture~\ref{vdW} with a power saving, conditionally upon obtaining sufficient cancellation in certain exponential sums that are related to a twisted form of Chowla's conjecture over function fields.  Analogous sums have been considered recently in the literature, and from personal communication with Sawin and Shusterman, we anticipate that it is possible to establish enough cancellation in these sums to resolve Conjecture~\ref{vdW}, at least for large $n$.  As detailed below, if square root cancellation is achieved in the relevant exponential sums, this would resolve Conjecture~\ref{vdW} for all $n \geq 7$.  We will make this connection precise before discussing relevant previous works and the reduction to the case $G_f \cong A_n$ in more detail.  

\subsection{Chowla's conjecture and the main result}
\label{ChowlaIntro}

As indicated above, the main purpose of this paper is to prove the strong Van der Waerden conjecture for large $n$, assuming a certain twisted version of Chowla's conjecture over function fields.  This connection brings together two deep and active areas of mathematics and additionally we anticipate an upcoming solution to our version of Chowla's conjecture! We begin by briefly recalling Chowla's conjecture, which comes in several forms, see \cite{Ram2018}. Denoting by $\mu$ the M\"obius function, a representative special case asserts that if $0 \ne h \in \bZ$ then
\[
M^{-1} \sum_{m \le M} \mu(m) \mu(m+h) \to 0 \qquad (M \to \infty).
\]
Chowla's conjecture is a major open problem in its own right, and is closely related to a far-reaching conjecture of Sarnak, see \cite{GKL2018}.

The function field analogue of Chowla's conjecture was introduced by Carmon and Rudnick \cite{CR2014}. Suppose $q$ is a prime power, and that a non-zero polynomial $f(x) \in \bF_q[x]$ factors as $f = f_1 \cdots f_r$, where the $f_j$ are irreducible. Then the M\"obius function of $f$ is defined as
\begin{equation}
\label{eq:mobius}
\mu_q(f) := \begin{cases}
    (-1)^r &\text{if the } f_j \text{ are distinct}, \\
    0 & \text{else.}
\end{cases}
\end{equation}
For any ring $R$, denote by $\cM_n(R)$ the set of monic polynomials in $R[X]$ of degree $n$, and write $\cA_{\le m}(R)$ for the set of all --- not necessarily monic --- polynomials in $R[X]$ of degree at most $m$. Let $\bE$ denote expectation. A special case of Carmon and Rudnick's result is that if $n \ge 2$ then
\[
\displaystyle
\sup_{0 \ne h \in \cA_{\le n - 1}(\bF_q)} \left| \bE_{g \in \cM_n(\bF_q)} \, \mu_q(g) \mu_q(g+h) \right|
\ll_n q^{-1/2}.
\]
Keating and Roddity-Gershon \cite{KR2015, KR2018} worked on this and similar problems, making quantitative progress. We refer the reader to the article of Gorodetsky and Sawin \cite{GS2020} who showed, \emph{inter alia}, that if $n \ge 4$ then
\[
\displaystyle
\sup_{0 \ne h \in \cA_{\le n - 1}(\bF_q)} \left| \bE_{g \in \cM_n(\bF_q)} \, \mu_q(g) \mu_q(g+h) \right|
\ll_n q^{-1}.
\]

We will be interested in similar statements that restrict to prime values $q = p$, but are more general in the sense that they involve an arbitrary twist $e_p(g \cdot \tet)$. Here and throughout, we write $e(x) = e^{2 \pi i x}$ for $x \in \bR$, and abbreviate $e_m(x) = e(x/m)$. 
Let $g \cdot \theta$ denote the dot product of monic polynomials $g(X) = X^n + b_1 X^{n-1} + \cdots + b_n$ and $\theta(X) = X^n + c_1 X^{n-1} + \cdots + c_n$, defined as $g \cdot \theta = \bb \cdot \bc$. We now define our main bridge between the Chowla and Van der Waerden conjectures: the statement $\TC(n, \tau)$ below.

\begin{defn}
\label{def:TC}
Fix $n \in \bN$ and $\tau > 0$. Then $\TC(n, \tau)$ is the statement that, for all sufficiently large primes $p$,
\[
\displaystyle
\sup_{\substack{0 \ne h \in \cA_{\le n - 1}(\bF_p) \\ \tet \in \cM_n(\bF_p)}} \left|
\bE_{g \in \cM_n(\bF_p)} \, \mu_p(g) \mu_p(g+h) e_p(g \cdot \tet)
\right|
\ll_{n,\tau} p^{-\tau}.
\]
\end{defn}

Our main result is as follows.

\begin{thm}
\label{MainThm}
Suppose that for all $n \ge 3$, the statement $\TC(n, \tau)$ holds for some 
\[
\tau = \tau(n) > \frac{4n}{n+2}.
\]
Then Conjecture \ref{vdW} holds.  More precisely, we then have for each fixed $n \ge 3$ and $\tau$ as above that $E_n(B) = O(B^{n-1-\delta(n,\tau)})$ for some $\delta(n,\tau)>0$ depending only on $n$ and $\tau$.
\end{thm}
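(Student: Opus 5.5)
Using Bhargava's work together with Chow--Dietmann, I first reduce to the alternating case. Bhargava's bound gives $E_n(B) \ll B^{n-1}$. More precisely, his argument shows that for irreducible $f$ with $G_f \not\cong S_n, A_n$, the count is $O(B^{n-1-\delta})$ for some $\delta>0$ (for $n\ge 3$). For small $n$ the same reduction comes from Chow--Dietmann. It therefore suffices to show that
\[
N(B) := \#\{ f \in \cM_n(\bZ) : |a_j|\le B,\ \mathrm{disc}(f) = \square \}
\]
is $O(B^{n-1-\delta})$. Indeed, $G_f \le A_n$ exactly when the discriminant $D(\mathbf a)$ is a nonzero square, and the cases $D=0$ contribute $O(B^{n-1})$ trivially, in fact $O(B^{n-2}\log B)$ by dimension counting, which is harmless.

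To bound $N(B)$ I use the Heath-Brown square sieve in the Heath-Brown--Pierce form. Fix a set $\mathcal P$ of primes in $[P,2P]$ with $\#\mathcal P \gg P/\log P$. Then
\[
N(B) \ll \frac{B^n}{\#\mathcal P} + \frac{1}{(\#\mathcal P)^2}\sum_{p\ne p'\in\mathcal P}\Bigl|\sum_{\mathbf a}\Bigl(\frac{D(\mathbf a)}{p}\Bigr)\Bigl(\frac{D(\mathbf a)}{p'}\Bigr)\Bigr| + (\text{diagonal}),
\]
with the sum over $\mathbf a$ in the box $[-B,B]^n$. The key arithmetic input is the function field fact that for squarefree $g\in\cM_n(\bF_p)$ one has $\bigl(\frac{\mathrm{disc}(g)}{p}\bigr) = (-1)^n\mu_p(g)$, by Pellet/Stickelberger. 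So the Legendre character of the discriminant is essentially $\mu_p$ on monic polynomials of degree $n$ over $\bF_p$, and it vanishes when $g$ is not squarefree.

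For the inner sum I complete modulo $pp'$ via Poisson summation. When $P^2 > B$ this completion is too lossy, so I insert $q$-van der Corput differencing first. Shifting $\mathbf a$ by multiples $p'\mathbf h$ with $|\mathbf h| \le H$, and applying Cauchy--Schwarz, removes the $p'$-character and leaves correlation sums
\[
\sum_{\mathbf a} \Bigl(\frac{D(\mathbf a)}{p}\Bigr)\Bigl(\frac{D(\mathbf a+ p'\mathbf h)}{p}\Bigr)
\]
over a box. Completing these modulo $p$ gives complete sums of the form $\sum_{g\in\cM_n(\bF_p)} \mu_p(g)\mu_p(g+h)e_p(g\cdot\theta)$, with $h$ the reduction of $p'\mathbf h$. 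I embed this $h$ in $\cA_{\le n-1}(\bF_p)$, using that the shift changes only the lower coefficients. Then $\TC(n,\tau)$ bounds each such sum by $p^{n-\tau}$ whenever $h\ne 0$. The terms with $h=0$, i.e.\ $\mathbf h \equiv 0 \bmod p$, which means $\mathbf h=0$ when $H<P$, contribute the trivial diagonal. The Fourier weights from completing the box are summed using the standard $\ell^1$ bound, costing a factor of $(\log p)^n$ once $B$ exceeds $p$, or $(B/p)^n$ multiplicity otherwise.

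Finally I optimise. Collecting terms, the bound takes roughly the shape
\[
N(B) \ll \frac{B^n}{P} + B^n\Bigl(\frac{1}{H^{n}} + P^{-\tau}\Bigr)^{1/2}\cdot(\text{powers of } P, H, \log)
\]
and I choose $P = B^{\alpha}$ and $H = B^{\beta}$ to balance the terms. The requirement that the result beat $B^{n-1}$ should translate precisely into $\tau > 4n/(n+2)$, and the resulting $\delta(n,\tau)>0$ is then explicit.

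I expect the main obstacle to be the bookkeeping in the differencing and completion step. One must check that every nondiagonal shift really corresponds to $h\ne 0$ of degree $\le n-1$ and to a monic twist $\theta$, so that $\TC$ applies; a nonmonic frequency can be normalised by absorbing the leading coefficient. One must also ensure that the exponent accounting yields the stated threshold rather than something weaker, and I would tune the number of differencing steps to achieve this.
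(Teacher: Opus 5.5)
There is a genuine gap, and it lies in the choice of sieve rather than in the bookkeeping. With a single set $\mathcal P$ of primes in $[P,2P]$, the trivial term of the square sieve is $B^n/\#\mathcal P \asymp B^n(\log P)/P$. Beating $B^{n-1}$ therefore forces $P$ to exceed $B$.

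But $q$-van der Corput differencing over $p'$ helps only if the shifts $p'\mathbf h$ stay inside the box. Differencing over any modulus $q_2$ leaves the diagonal term $H^{-n/2}B^n \asymp q_2^{n/2}B^{n/2}$ (the first term in Proposition~\ref{pPoisson}). With $q_2 = p' \asymp P$, this is below $B^{n-1}$ only when $P < B^{1-2/n}$, which is incompatible with $P > B$. Once $p' > 4B$, every off-diagonal correlation vanishes for support reasons, and Cauchy--Schwarz returns only $T \ll B^n$.

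Skipping the differencing and completing modulo $pp' > B^2$ directly does not help either. It would need more than square-root cancellation in the single twisted sums $\sum_g \mu_p(g)e_p(g\cdot\theta)$, and $\TC$ would never enter. So no choice of $P$, $H$, or number of differencing steps makes your displayed bound $o(B^{n-1})$, and the threshold $\tau > 4n/(n+2)$ cannot come out of it.

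The missing idea is the Heath-Brown--Pierce sieve with composite moduli, which is what Lemma~\ref{lSquareSieve} supplies. Take moduli $uv$ with $u \in \cU$ and $v \in \cV$, drawn from disjoint sets of primes of sizes about $Q^{\alpha}$ and $Q^{1-\alpha}$, where $Q = B^{\delta}$ with $\delta > 1$. The number of moduli is then about $Q/(\log Q)^2$, which exceeds $B$. Yet each off-diagonal pair factors as $\chi_{uv}\chi_{u'v'} = \chi_{uu'}\chi_{vv'}$.

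\begin{itemize}
\item For $u \ne u'$, you difference over the \emph{small} modulus $q_2 = vv' \asymp Q^{2-2\alpha} \le B$. You then complete over the \emph{large} modulus $q_1 = uu'$, applying $\TC$ at each prime factor via the Chinese remainder theorem, with separate treatment when $\mathbf h$ vanishes modulo only one of $u, u'$.
\item The pairs with $u = u'$ and $v \ne v'$ carry the smaller total weight $1/U$. They need no differencing: take $q_1 = vv'$ and $q_2 = 1$.
\item Controlling the $\gcd$ terms requires the support of $\omega$ to be polynomially bounded in $V$. This is exactly the refinement proved in Lemma~\ref{lSquareSieve}.
\end{itemize}

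The outcome is $N(B) \ll (B^{n/2}Q^{-1} + Q^{(1-\alpha)n} + Q^{\alpha(n-\tau)})B^{n/2}(\log B)^2$. Optimising with $\alpha = n/(2n-\tau)$ and then choosing $\delta$ gives $\delta > 1$ precisely when $\tau > 4n/(n+2)$.

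Separately, your reduction step overstates what Bhargava alone gives. Reaching \eqref{AltMain} for all $n \ge 3$ combines his Corollary 3(a), Chow--Dietmann's Theorem 1.7 and the discussion after their Theorem 1.12, and separate arguments for the groups 7T4 and 8T47.
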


We would not expect the hypothesis of Theorem \ref{MainThm} to hold for all $n \ge 3$; however, one might guess, based on the `square root cancellation' heuristic for character sums, that $\TC(n,\tau)$ holds for all $\tau < n/2$. If this is the case, then the hypothesis of Theorem \ref{MainThm} would be true for all $n \ge 7$.  Based on personal communication with Sawin and Shusterman, we anticipate that it is possible to prove $\TC(n,\tau)$ with $\tau$ growing linearly in $n$ as $n\to \infty$.  If so, together with Theorem \ref{MainThm}, this will solve Conjecture \ref{vdW} for all sufficiently large $n$.

Conjecture \ref{vdW}, despite its apparent depth, is expected to be far from the truth. We refer the reader to \cite{BBM} for some interesting conjectures and questions that are likely to be closer to the boundary between what is true and what is not.

\subsection{History and reduction to square discriminants}

As already mentioned, Van der Waerden did not restrict to monic polynomials. However, the mantle was later taken up by Knobloch \cite{Kno1955} and Gallagher \cite{Gal1973} with methods applying to both versions (monic and non-monic) of the problem.  
Gallagher's work relied on his development of the multidimensional large sieve, and gave the bound
$
E_n(B) \ll B^{n-1/2} \log B.
$
This was the benchmark for some 40 years before being refined by Zywina \cite{Zyw2010}, Dietmann~\cite{Die2013}, and more recently by Anderson--Gafni--Lemke Oliver--Lowry-Duda--Shakan--Zhang~\cite{Six2023} and Bhargava \cite{Bha2025}. In particular, Bhargava \cite{Bha2025} spectacularly showed that
\[
E_n(B) \ll B^{n-1}.
\]
Bhargava's exciting result, referred to as Van der Waerden's conjecture, utilised several new techniques, including the novel introduction of the \emph{double discriminant}.  Additionally, this result achieved the \emph{weaker form of Van der Waerden's conjecture}, the bound $E_n(B) \ll B^{n-1+\varepsilon}$, using a short, slick, and inventive argument, and further went on to `remove the epsilon', achieving \emph{Van der Waerden's conjecture}, via Fourier analytic tools.  However the \emph{strong form of Van der Waerden's conjecture} remained open for all $n \neq 3,4$; that is, Conjecture~\ref{vdW} has only been resolved for $n=3,4$ (by Chow and Dietmann \cite{CD2020}).

We know from \cite{Bha2025, CD2023} that stronger bounds than Bhargava's $E_n(B) \ll B^{n-1}$ can be obtained if one also excludes the alternating group. For example, if we let $E_n^\dag(B)$ count irreducible polynomials \eqref{monic} such that $|a_j| \le B$ for all $j$ and $G_f \not \ge A_n$ then, by \cite[Corollary 3(a)]{Bha2025},
\begin{equation*}
E_n^\dag(B) \ll B^{n-2}
\qquad (n \ge 10).
\end{equation*}
Combining this with \cite[Theorem 1.7]{CD2023} yields
\[
E_n^\dag(B) \ll B^{n-1.017}
\qquad (n \notin \{ 7, 8 \}).
\]
We now infer from the discussion following \cite[Theorem 1.12]{CD2023} that this estimate holds for all transitive permutation groups $G \notin \{ A_n, S_n \}$ of degree $n \geq 3$, except possibly for 7T4 and 8T47 in the standard classification of Butler and McKay \cite{BM1983}. However, using SageMath \cite{Sag2025}, we find that the former has index at least 3 in the sense of Malle's conjecture (since, by listing all elements, we see that it does not have any 3-cycles or double-transpositions) and that the latter is imprimitive. Thus, applying \cite[Theorem 2]{Bha2025} to the former and \cite[Equation (5)]{Bha2025} --- due to Widmer \cite{Wid2011} via the correspondence \cite[Lemma 2.1]{CD} --- to the latter, we glean that
\begin{equation}
\label{AltMain}
E_n^\dag(B) \ll B^{n-1.017}
\end{equation}
holds for all fixed $n \ge 3$. 
For large $n$, the bound $E_n^\dag(B) \ll B^{n - c n / \log n}$ holds for a constant $c>0$; see \cite[Corollary~1.3]{LO}.

The upshot is that, for Conjecture \ref{vdW}, it suffices to count $A_n$ polynomials \eqref{monic}. These have square discriminant \cite{Cox}. 

\subsection{Methods}

In view of \eqref{AltMain}, it suffices to prove that
\[
N_{A_n, n}(B) = o(B^{n-1})
\qquad (B \to \infty).
\]
The polynomials counted by $N_{A_n, n}$ are a subset of the set of polynomials as in \eqref{monic} with $|a_j| \le B$ for all $j$ whose discriminant
\[
\Del = \Del(f) = \Del(a_1, \ldots, a_n) 
\]
is a perfect square. Note that $\Del$ is a polynomial of degree $2n - 2$ with integer coefficients in the variables $a_1, \ldots, a_n$.

Our approach adapts and unifies two main tools: the $q$-analogue of van der Corput differencing for exponential sums and a relationship between square discriminants and polynomial factorisation, originating 100 years apart.  Our starting point is inspired by the work of Heath-Brown and Pierce \cite{HBP2012}, who used the power sieve and the $q$-analogue of van der Corput differencing to break past a barrier for counting integer solutions to polynomial equations of the form
\[
F(x_1, \ldots, x_n) = y^r
\]
under suitable conditions. In particular, they assumed that the vanishing of $F$ defines a smooth, projective variety. Since the discriminant variety $\Del = 0$ is far from smooth, their results cannot be applied directly to count square values of $\Del$.

The $q$-analogue of van der Corput's method originates in 1978 work of Heath-Brown \cite{HB1978} and is an iterative process using subtraction between shifted exponential sums to obtain cancellation while respecting the modulus $q$.  Motivated by Heath-Brown and Pierce \cite{HBP2012}, we carefully use an adapted version to difference over one of our moduli; with an interplay of analytic and number theoretic tools, we relate our original exponential sum to a sum of Chowla type.  Pellet's formula is used in a crucial step at this point.

Indeed, our passage to function fields hinges on Pellet's formula from 1878 \cite{Pel1878} (which has an intriguing history of its own):
\begin{equation}
\label{Pellet}
\mu_p(f) = (-1)^n \left(
\frac{\Del(f)}{p} \right) \qquad (p \ge 3 \text{ prime}),
\end{equation}
where $\mu_p(f)$ is as in \eqref{eq:mobius} and $f$ is of degree $n$ over $\bF_p$.  Since square discriminants are crucial in our analysis, using the Legendre symbol to pass to working with the function field variant of the M\"obius function is a key observation. 

\subsection{Other related problems}

Our findings in the non-monic setting are of a similar strength, but depend on a slight variation of the statement $\TC(n,\tau)$ introduced in Definition~\ref{def:TC}. These will be discussed in detail in \S \ref{nm}.

In this article we have been discussing the `large box model' of probabilistic Galois theory in which the random polynomial has fixed degree but the coefficients grow. The `large degree model' considers the complementary setting in which the set of allowed coefficients is fixed and the degree is large. Here it is a widely held belief, going back at least to Odlyzko and Poonen \cite{OP1993}, that a random polynomial is irreducible asymptotically almost surely as long as the constant coefficient vanishes with probability $0$ (and the measures by which the coefficients are chosen are not too concentrated). Despite impressive progress \cite{BSHKP, BSKK2023, BSK2020, BV2019}, the conjecture has not been settled. The intermediate setting in which the degree is large and the coefficients also grow (but arbitrarily slowly) has been studied in \cite{BSG2025}. More recently, random multiplicative coefficients have been investigated in \cite{KM, VX}.
Another natural notion of a random polynomial to consider is the characteristic polynomial of a random matrix \cite{AOD, Ebe2022, FJSS2023, Riv2008}. In each of these settings, the Galois group is subject of scrutiny as well. 

There has also been some work in the analogous setting of polynomials defined over function fields \cite{BSEM2024, Ent2025, EP2024}. Interestingly, a polynomial version of Chowla's conjecture was used in the large box model over function fields to show conditionally that the Galois group $S_n$ is generic for irreducible polynomials \cite{EP2024}.

Instead of counting polynomials, one can also count number fields with prescribed Galois group \cite{Alb2020, ALOWW, Bha2025, CD, Dum2018, FK, KP2023, Mak1985, MTTW, Wan2021, Wri1989}, where there is Malle's conjecture \cite{LS, Mal2002, Mal2004}. This counting problem feeds into the problem of counting number fields with absolute discriminant up to $X$, where a folklore conjecture predicts an asymptotic main term of $c_n X$, for some $c_n > 0$. See \cite{Eight2024, Bha2005, Bha2010, BSW2022, DH1971, LOT2022} for some results in this direction, as well as \cite{LO}.

The connection between these problems and others in arithmetic statistics is nicely described in Pierce's ICM proceedings \cite{Pie2022}. We also refer the reader to \cite{BSTTTZ2020} for a wonderful illustration of the significance of counting square discriminants. Lastly, by a result of Zarhin \cite{Zar2000}, the quantity $E_n^\dag(H)$ is also an upper bound for the count of hyperelliptic Jacobians with complex multiplication.  These are just some of the connections that our counting opens further.

\subsection*{Notation}

We adopt the Vinogradov and Bachmann--Landau notations, as we now describe. If $f$ and $g$ are complex-valued functions, then we write $f \ll g$ or $f = O(g)$ if $|f| \le C|g|$ pointwise, for some constant $C>0$. We write $f \asymp g$ if $g \ll f \ll g$, and $f = o(g)$ if $f/g \to 0$ in some specified limit. We use a subscript to indicate that the implied constant $C$ is allowed to depend on certain variables.

Note that in the monic case, the various operations we define (such as the polynomial dot product) omit interaction with the leading coefficient.  This ensures that the crucial step of Poisson summation on $\cM_n(\cR)$ works out correctly.  

\subsection*{Organisation}

In \S \ref{ss}, we present a variant of Heath-Brown and Pierce's square sieve. In \S \ref{passage}, we establish Theorem \ref{MainThm} assuming a technical proposition. We then prove that proposition in \S \ref{sec:pPoisson}. Finally, in \S \ref{nm}, we discuss the non-monic version of Van der Waerden's conjecture.

\subsection*{Acknowledgements}

We thank the Mathematisches Forschungsinstitut Oberwolfach for favourable working conditions.  Thank you to Manjul Bhargava, Lillian Pierce, and Frank Thorne for helpful suggestions in preparing this work.

\subsection*{Funding}
The first author was partially supported by NSF CAREER DMS-2237937 and the Gregg Zeitlin Early Career Professorship.
The fourth author was supported by the Dutch Research Council (NWO) through the Veni grant OCENW.M20.233.
The fifth author gratefully acknowledges the support of the Dutch Research Council (NWO) through the Veni grant `New methods in arithmetic statistics'.
The sixth author was partially supported by NSF grant DMS-2618934 and by the Office of the Vice Chancellor for Research at the University of Wisconsin-Madison with funding from the Wisconsin Alumni Research Foundation.

\subsection*{Rights}

For the purpose of open access, the authors have applied a Creative Commons Attribution (CC-BY) licence to any Author Accepted Manuscript version arising from this submission.

\section{A flexible square sieve}
\label{ss}

An important tool in the proof of Theorem~\ref{MainThm} is a refinement of the square sieve of Heath-Brown and Pierce \cite[Lemma~1]{HBP2012} in the case that the support of the weight, denoted $\omega$ there, is polynomially bounded. 
This observation is easy to incorporate into their proof, leveraging the fact that the number of prime divisors of $m$ is $o(\log V)$ if \mbox{$m \le V^\kap$}.  Moreover, since $\kap$ is fixed, the convergence implied by the $o(\cdot)$ asymptotic notation is not claimed to be uniform in $\kap$. The wider range of parameters in which the resulting refined square sieve can be applied is key in the proof of Theorem~\ref{SquareVersion} below, which implies Theorem~\ref{MainThm}.

Here and unless otherwise noted, the Dirichlet characters $\chi_{j}(m)$ are the Jacobi symbols $\bigl(\frac{m}{j}\bigr)$.

\begin{lemma}
\label{lSquareSieve}
Fix $\kappa>0$. Then there exists $C = C(\kap) > 0$ such that the following holds.

Let $\mathcal{U}$ and $\mathcal{V}$ be disjoint sets of odd primes and define $$\mathcal{A} = \{ u v : u \in \mathcal{U}, \ v \in \mathcal{V}\}.$$ Set $A = \# \mathcal{A}$, $U = \# \mathcal{U}$ and $V = \# \mathcal{V}$. Assume that $V \geq 10$ and
\[
A \gg_{\kappa} \log(V) \cdot \max\{U + V, V^2\}.
\]
Let $\omega: \bZ \rightarrow \bR_{\geq 0}$ be such that $\omega(m) = 0$ for all $|m| \geq V^\kappa$. Then
\begin{align*}
\sum_{m \neq 0} \omega(m^2) &\leq C A^{-1} \sum_m \omega(m) \\
&\quad + C A^{-2} \sum_{v, v' \in \mathcal{V}} \sum_{u \neq u' \in \mathcal{U}} \left| \sum_m \omega(m) \chi_{uv}(m) \chi_{u'v'}(m) \right| \\
&\quad + C U A^{-2} \sum_{v \neq v' \in \mathcal{V}} \left| \sum_m \omega(m) \chi_v(m) \chi_{v'}(m) \right|.
\end{align*}
\end{lemma}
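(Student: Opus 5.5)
We follow the proof of \cite[Lemma~1]{HBP2012}: form a positive quadratic expression in the characters, expand it, and use the bound on $|m|$ to control the number of primes dividing $m$. Set
\[
\Sigma := \sum_m \omega(m) \Bigl| \sum_{a \in \cA} \chi_a(m) \Bigr|^2 \ge 0 .
\]
For $m = k^2 \neq 0$ with $\omega(m)\neq 0$, we have $\chi_{uv}(k^2) = 1$ unless $u \mid k$ or $v \mid k$. Since $|k^2| < V^\kap$, the number of primes dividing $k$ is at most $\omega(k) \ll \log(V^{\kap})/\log\log(V^{\kap}) = o_\kap(\log V)$. Any such prime $u$ kills at most $V$ elements of $\cA$, and any such $v$ kills at most $U$. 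Hence
\[
\sum_{a\in\cA}\chi_a(k^2) \ge A - o(\log V)\,(U+V).
\]
By the hypothesis $A \gg_\kap \log(V)(U+V)$, with the implied constant chosen large in terms of $\kap$, this is at least $A/2$. So $\Sigma \ge (A^2/4)\sum_{m\ne0}\omega(m^2)$, and it remains to bound $\Sigma$ from above.

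Expand the square as $\Sigma = \sum_{u,v,u',v'} \sum_m \omega(m)\chi_{uv}(m)\chi_{u'v'}(m)$ and sort the quadruples. The pairs with $u\ne u'$ give, after taking absolute values, exactly the second term of the lemma multiplied by $A^2$. The remaining pairs have $u=u'$, and there $\chi_{uv}\chi_{uv'} = \chi_u^2\chi_v\chi_{v'}$, with $\chi_u^2(m) = 1_{u\nmid m}$. When $v = v'$ as well, the sum is trivially bounded by $A\sum_m\omega(m)$, which yields the first term. When $v \ne v'$, we write $1_{u\nmid m} = 1 - 1_{u\mid m}$. The constant part contributes at most $U\sum_{v\ne v'}\bigl|\sum_m\omega(m)\chi_v\chi_{v'}(m)\bigr|$, which is the third term times $A^2$.

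The main obstacle is the correction term
\[
\sum_{u}\sum_{v\ne v'}\Bigl|\sum_{u\mid m}\omega(m)\chi_v\chi_{v'}(m)\Bigr| .
\]
We bound it trivially by $V^2 \sum_m \omega(m)\,\#\{u \in \cU: u\mid m\}$. For $m \ne 0$ with $|m|<V^\kap$, the count $\#\{u\mid m\}$ is $o(\log V)$. The value $m=0$ needs separate care; our first attempt is to note that $\chi_v(0)=0$, so $m=0$ contributes nothing to any sum with $v\ne v'$. The correction is therefore $\ll V^2 \log(V)\sum_m\omega(m)$. Since $A \gg \log(V)V^2$, this is $\ll A\sum_m\omega(m)$ and can be absorbed into the first term.

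Dividing the resulting bound on $\Sigma$ by $A^2/4$ gives the lemma with some $C=C(\kap)$. The only new ingredient compared with \cite{HBP2012} is the use of the $o(\log V)$ bound on the number of prime factors, which replaces the bound $\log m/\log V$ used there. The fact that this $o(\cdot)$ is not uniform in $\kap$ is what forces the constant $C$ and the implied constant in the hypothesis to depend on $\kap$.
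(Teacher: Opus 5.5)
Your proposal is correct and follows essentially the same route as the paper. You use the same positive form $\Sigma$, the same lower bound via counting prime divisors of $m$, and the same decomposition of the expanded square: your split into $u\neq u'$ versus $u=u'$ is exactly the paper's regrouping of the coprime pairs with $S(\mathcal{U})$, and your correction term is the paper's $E(\mathcal{V})$, from which $m=0$ drops out because $\chi_v(0)=0$.

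The only blemish is cosmetic. Each nonzero square $m=k^2$ is counted twice in $\sum_{k\neq 0}\omega(k^2)$, so your lower bound should read $\Sigma\ge (A^2/8)\sum_{k\neq0}\omega(k^2)$. This is absorbed into $C$.
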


\begin{remark}
We can conveniently pick $\kappa > 0$ as a function of $n$. 
Then $C$ depends only on $n$, which is a constant for us. 
\end{remark}

\begin{proof}
We will give lower and upper bounds for
$$
\Sigma := \sum_m \omega(m) \left| \sum_{q \in \mathcal{A}} \chi_q(m) \right|^2.
$$
For the lower bound, we note that each $m$ is summed with non-negative weight. Moreover, if $m = t^2$ is a non-zero square and $\omega(m) \neq 0$, then
\begin{equation}
\label{eSquareLower}
\sum_{q \in \mathcal{A}} \chi_q(m) = \sum_{q \in \mathcal{A}} \chi_q(t^2) = \sum_{\substack{q \in \mathcal{A} \\ \gcd(q, m) = 1}} 1 = A - \sum_{\substack{q \in \mathcal{A} \\ \gcd(q, m) \neq 1}} 1 \gg_{\kappa} A.
\end{equation}
Indeed, this last step follows since $q = uv \in \mathcal{A}$ can have non-trivial greatest common divisor with $m$ for two distinct reasons: since $u$ and $v$ are primes, either $u$ divides $m$ or $v$ divides $m$. Therefore the second sum is at most
\begin{equation}
\label{eOmegaBound}
\nu(m) (U + V) = o_\kappa(\log(V) \cdot (U + V)),
\end{equation}
where $\nu(m)$ denotes the number of distinct prime divisors of $m$ and where we used that $\omega(m)$ is non-zero only if $m < V^\kappa$. Inserting \eqref{eOmegaBound} gives \eqref{eSquareLower}, upon using our assumption that $A \gg_{\kappa} \log(V) \cdot (U + V)$.
Thus,
\begin{equation}
\label{eSigmaLower}
\Sigma \gg_{\kappa} A^2 \sum_{m \neq 0} \omega(m^2).
\end{equation}

For the upper bound, we expand $\Sigma$ as
\begin{alignat}{2}
\Sigma &= &&\sum_{q, q' \in \mathcal{A}} \sum_m \omega(m) \chi_q(m) \chi_{q'}(m) \nonumber \\
&= &&\sum_{q \in \mathcal{A}} \sum_m \omega(m) \chi_q(m)^2 \, + \sum_{\substack{q \neq q' \in \mathcal{A} \\ \gcd(q, q') = 1}} \sum_m \omega(m) \chi_q(m) \chi_{q'}(m) \nonumber \\
& && + \sum_{\substack{q \neq q' \in \mathcal{A} \\ \gcd(q, q') \neq 1}} \sum_m \omega(m) \chi_q(m) \chi_{q'}(m). \label{eUpper}
\end{alignat}
The first term in \eqref{eUpper} is at most 
\begin{equation}
\label{eUpper1}
A \sum_m \omega(m).
\end{equation}
We break the third term in \eqref{eUpper} into the two different sums
\begin{align*}
S(\mathcal{U}) &:= \sum_{v \in \mathcal{V}} \sum_{u \neq u' \in \mathcal{U}} \sum_m \omega(m) \chi_{uv}(m) \chi_{u'v}(m), \\
S(\mathcal{V}) &:= \sum_{u \in \mathcal{U}} \sum_{v \neq v' \in \mathcal{V}} \sum_m \omega(m) \chi_{uv}(m) \chi_{uv'}(m).
\end{align*}
We may group $S(\mathcal{U})$ with the second term in \eqref{eUpper} and apply the triangle inequality to get
\begin{equation}
\label{eUpper2}
\sum_{v, v' \in \mathcal{V}} \sum_{u \neq u' \in \mathcal{U}} \left| \sum_m \omega(m) \chi_{uv}(m) \chi_{u'v'}(m) \right|.
\end{equation}
It remains to treat $S(\mathcal{V})$. We write
\begin{align*}
S(\mathcal{V}) &= \sum_{u \in \mathcal{U}} \sum_{v \neq v' \in \mathcal{V}} \sum_{u \nmid m} \omega(m) \chi_{uv}(m) \chi_{uv'}(m) \\
&= \sum_{u \in \mathcal{U}} \sum_{v \neq v' \in \mathcal{V}} \sum_{u \nmid m} \omega(m) \chi_{v}(m) \chi_{v'}(m) \\
&= U \sum_{v \neq v' \in \mathcal{V}} \sum_m \omega(m) \chi_v(m) \chi_{v'}(m) \\
&\qquad - \sum_{u \in \mathcal{U}} \sum_{v \neq v' \in \mathcal{V}} \sum_{u \mid m} \omega(m) \chi_v(m) \chi_{v'}(m) \\
&=: M(\mathcal{V}) - E(\mathcal{V}).
\end{align*}

For $E(\mathcal{V})$, note that the term $m = 0$ does not contribute due to the presence of $\chi_v(m) \chi_{v'}(m)$. Then we bound $E(\mathcal{V})$ trivially by
\begin{equation}
\label{eUpper3}
|E(\mathcal{V})| \leq V^2 \sum_{m \neq 0} \omega(m) \nu(m) \ll_\kappa V^2 \log V \sum_m \omega(m),
\end{equation}
where we used the estimate $\nu(m) \ll_\kappa \log V$, which follows from the fact that $\omega(m) = 0$ for $|m| \geq V^\kappa$ and standard bounds for $\nu(m)$.

Combining the upper bounds \eqref{eUpper1}, \eqref{eUpper2} and \eqref{eUpper3}, we get that
\begin{align*}
\Sigma &\ll_\kappa (A + V^2 \log V) \sum_m \omega(m) \\&\quad + \sum_{v, v' \in \mathcal{V}} \sum_{u \neq u' \in \mathcal{U}} \left| \sum_m \omega(m) \chi_{uv}(m) \chi_{u'v'}(m) \right| + |M(\mathcal{V})|.
\end{align*}
Since $V^2 \log V \ll_{\kappa} A$ by assumption, we may rewrite this as
$$
\Sigma \ll_\kappa A \sum_m \omega(m) + \sum_{v, v' \in \mathcal{V}} \sum_{u \neq u' \in \mathcal{U}} \left| \sum_m \omega(m) \chi_{uv}(m) \chi_{u'v'}(m) \right| + |M(\mathcal{V})|.
$$
Comparing this with \eqref{eSigmaLower} gives the lemma.
\end{proof}
\section{The passage to function fields}
\label{passage}

We now set up our passage to function fields and explain how the statement $\TC(n, \tau)$ enters the problem.

Let $w \colon \bR^n \rightarrow \bR_{\geq 0}$ be a smooth, non-negative weight supported on $[-2, 2]^n$, such that $w = 1$ on $[-1, 1]^n$ and
\[
\frac{\partial^\bbet}{\partial \bx^\bbet} w(\bx) \ll_\bbet 1
\]
for all multi-indices $\bbet = (\beta_1, \dots, \beta_n)$.
Set $w_B(\bx) := w(\bx/B)$ and define
$$
\omega(m) := \sum_{\substack{f \in \cM_n(\bZ) \\ \Del(f) = m}} w_B(f).
$$

Our main technical result involving $\TC(n, \tau)$ (see Definition~\ref{def:TC}) is as follows.

\begin{prop}
\label{pPoisson}
Let $\tau \in (0,n]$ be such that $\TC(n, \tau)$ holds, and let $q_1, q_2 \in \bN$ be coprime with $q_2 \le B$. Assume that $q_1$ is a product of two primes $pp'$ satisfying $p < p' < 2p$. 
Define
$$
T(q_1, q_2) = \sum_{f \in \bZ^n} w_B(f) \chi_{q_1}(\Del(f)) \chi_{q_2}(\Del(f)),
$$
where $\chi_{q_1}$ and $\chi_{q_2}$ are Jacobi symbols. Then
\[
T(q_1, q_2) \ll_{n,\tau} q_2^{n/2} B^{n/2} + q_1^{-\tau/2} B^{n/2} \max\{ q_1^{n/2}, B^{n/2} \}.
\]
\end{prop}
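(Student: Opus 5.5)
The plan is to combine the $q$-analogue of van der Corput differencing, shifting by multiples of $q_2$, with Poisson summation modulo $q_1$. Pellet's formula \eqref{Pellet} then turns the resulting complete sums into exactly the twisted Chowla averages controlled by $\TC(n,\tau)$.

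\emph{Step 1: differencing.} Write $\chi_{q_1}\chi_{q_2}(\Del(f))$ and note that $\chi_{q_2}(\Del(f))$ depends only on $f \bmod q_2$. Here $f$ is identified with its coefficient vector in $\bZ^n$, and shifts are taken in $\cA_{\le n-1}$, which preserves monicity. Set $H = \lceil B/q_2\rceil \ge 1$ and let $\mathcal H=\{h\in\bZ^n: 0\le h_j<H\}$. Replacing $f$ by $f+q_2h$ and averaging over $h \in \mathcal H$ gives
\[
T=\frac{1}{H^n}\sum_{f}\chi_{q_2}(\Del(f))\sum_{h\in\mathcal H} w_B(f+q_2h)\,\chi_{q_1}(\Del(f+q_2h)).
\]
Only $f \ll B$ contribute. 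Cauchy--Schwarz over $f$ then yields
\[
|T|^2\ll \frac{B^n}{H^{2n}}\sum_{h_1,h_2\in\mathcal H}\Big|\sum_f w_B(f+q_2h_1)w_B(f+q_2h_2)\,\chi_{q_1}(\Del(f+q_2h_1))\chi_{q_1}(\Del(f+q_2h_2))\Big|.
\]
The diagonal $h_1=h_2$ contributes $\ll B^nH^{-n}B^n\ll q_2^nB^n$, which gives the first term $q_2^{n/2}B^{n/2}$.

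\emph{Step 2: Poisson modulo $q_1$.} For $h_1\ne h_2$, put $h=h_1-h_2$ and substitute $g = f + q_2 h_2$. Split $g$ into residue classes modulo $q_1$ and apply Poisson summation on $\cM_n$; monicity is harmless, since the dot product ignores the leading coefficient. The inner sum becomes
\[
\Big(\frac{B}{q_1}\Big)^n\sum_{\bc\in\bZ^n}\widehat{W}\Big(\frac{B\bc}{q_1}\Big)\,S(\bc),\qquad S(\bc)=\sum_{g\in\cM_n(\bZ/q_1)}\chi_{q_1}(\Del(g))\chi_{q_1}(\Del(g+q_2h))\,e_{q_1}(g\cdot\bc),
\]
where $W$ is the product of the two shifted weights. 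The derivative bounds on $w$ give rapid decay of $\widehat W$, so effectively $|\bc|\ll q_1^{\varepsilon}/B\cdot q_1$. The number of relevant $\bc$ is therefore $\ll \max\{1,(q_1/B)^n\}$ (up to harmless factors, which can be avoided by using decay rather than truncation).

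By the Chinese remainder theorem, $S(\bc)$ factors as a product of a sum modulo $p$ and a sum modulo $p'$, with twisted frequencies. By Pellet's formula \eqref{Pellet}, the factor modulo $p$ equals
\[
p^n\,\bE_{g\in\cM_n(\bF_p)}\mu_p(g)\mu_p(g+h')e_p(g\cdot\tet)
\]
for a suitable $\tet$ and $h'\equiv \bar{p'} q_2 h\pmod p$. Here $h'$ has degree $\le n-1$, and it is nonzero provided $p\nmid h$ (using $\gcd(q_1,q_2)=1$). Under $\TC(n,\tau)$ this factor is $\ll p^{n-\tau}$, and likewise the factor modulo $p'$ is $\ll p'^{\,n-\tau}$, so $S(\bc)\ll q_1^{n-\tau}$. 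Hence each off-diagonal inner sum is
\[
\ll \Big(\frac{B}{q_1}\Big)^n\max\Big\{1,\Big(\frac{q_1}{B}\Big)^n\Big\}q_1^{n-\tau}=q_1^{-\tau}\max\{B^n,q_1^n\}.
\]
Summing over the $\le H^{2n}$ pairs and multiplying by $B^nH^{-2n}$ gives $B^nq_1^{-\tau}\max\{B,q_1\}^n$. Taking square roots produces the second term.

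\emph{Main obstacle: degenerate shifts.} The delicate point is the shifts $h\neq0$ with $p\mid h$ or $p'\mid h$, for which $\TC$ gives nothing at that prime. Since $|h|<H\le 2B/q_2$, such $h$ exist only when $p<H$. Their proportion among pairs is $\ll p^{-n}$ (respectively $p'^{-n}$), up to boundary effects. For these I would bound the bad prime's factor trivially by $p^n$ while keeping the $\TC$ saving $p'^{-\tau}$ at the other prime. The loss relative to the generic bound is then a factor $p^{\tau}$, against a density gain of $p^{-n}$; since $\tau\le n$, this is acceptable. The case $p p'\mid h$ is similar, or is absorbed into the diagonal-type bound. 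Checking that this bookkeeping, together with the boundary cases $H\approx p$ and the non-uniform dependence in $\widehat W$, does not spoil the stated bound is where I expect the real work. The condition $p<p'<2p$ is what makes the two prime factors interchangeable in these estimates.
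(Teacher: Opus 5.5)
Your proposal is correct and follows the paper's proof essentially step for step. The steps match: differencing by shifts $q_2h$ with $H\asymp B/q_2$; Cauchy--Schwarz, with the diagonal giving $q_2^{n/2}B^{n/2}$; Poisson summation modulo $q_1$, with the decay of $\widehat W$ limiting the frequencies to $\ll\max\{1,(q_1/B)^n\}$; then the Chinese remainder theorem and Pellet's formula to invoke $\TC(n,\tau)$ at $p$ and $p'$ separately. The degenerate shifts you flag as the main obstacle are handled in the paper exactly as you suggest, through the extra terms $(H/p)^n(p')^{-\tau}+(H/p')^np^{-\tau}+(H/q_1)^n$; these are absorbed into $H^nq_1^{-\tau}$ using only $\tau\le n$, so the hypothesis $p<p'<2p$ is not actually needed at that step.
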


We defer the proof of Proposition~\ref{pPoisson} to \S \ref{sec:pPoisson}.

The following result, together with \eqref{AltMain}, implies Theorem \ref{MainThm}.

\begin{thm} 
\label{SquareVersion}
Let $n \ge 3$ be an integer, and denote 
\[
N(B) := \# \{ f \in \cM_n(\bZ) : |f| \le B, \ \ 0 \ne \Del(f) = \square\}.
\]
Suppose $\TC(n, \tau)$ holds, for some $\tau$ in the range
\begin{equation}
\label{etauconstraints}
    \frac{4n}{n+2} < \tau < \frac{n(n+6)}{2n+3}.
\end{equation}
Then
$$
N(B) \ll_{n, \tau} B^{n-1-\frac{n (\tau - 4) + 2 \tau}{2 (n (n+2) - \tau(n+1))}} (\log{B})^2 = o(B^{n-1}),
$$
as $B \to \infty$.
\end{thm}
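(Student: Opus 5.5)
The plan is to apply the square sieve, Lemma~\ref{lSquareSieve}, to the weight $\omega$ defined above, with both sets of primes taken dyadic. Since $w_B \ge 0$ and $w_B=1$ on $[-B,B]^n$, we have $N(B) \le \sum_{m\neq 0}\omega(m^2)$. Take $\cU$ to be the primes in $(P,2P]$ and $\cV$ the primes in $(Q,2Q]$, with $1\le Q<P$ so that the two sets are disjoint, and with $P,Q$ powers of $B$ to be chosen. Then $U\asymp P/\log P$, $V\asymp Q/\log Q$ and $A=UV$. Because $|\Del(f)|\ll B^{2n-2}$ on the support of $w_B$, and $Q$ will be a fixed positive power of $B$, we may fix $\kap=\kap(n,\tau)$ with $\omega(m)=0$ for $|m|\ge V^\kap$. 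The size hypothesis $A\gg_\kap \log V\cdot\max\{U+V,V^2\}$ then reduces to $P\gg Q(\log B)^{O(1)}$, which will hold with room to spare. Trivially $\sum_m\omega(m)\ll B^n$.

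Next I bound the three terms of Lemma~\ref{lSquareSieve}. Using multiplicativity of Jacobi symbols in the modulus, $\chi_{uv}\chi_{u'v'}=\chi_{uu'}\chi_{vv'}$ on the support, up to the contributions of $m$ sharing a factor with $vv'$. Off the support these symbols agree because $\chi_v^2(m)=1$ for $v\nmid m$, and the case $v=v'$ yields $q_2=1$. So every inner sum in the second term is $T(uu',q_2)$ with $q_2\in\{1,vv'\}$ coprime to $q_1=uu'$. Ordering $u<u'$, we have $u<u'<2u$, so Proposition~\ref{pPoisson} applies with $q_2\le 4Q^2\le B$. Choosing $P\ge B$ gives the max in Proposition~\ref{pPoisson} as $q_1^{n/2}\asymp P^n$, and so
\[
\text{second term}\ll Q^nB^{n/2}+P^{n-\tau}B^{n/2}.
\]
In the third term the inner sum is $T(vv',1)$, to which I again apply Proposition~\ref{pPoisson}, now with $q_1=vv'$ (again $v<v'<2v$) and $q_2=1$. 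This gives
\[
\text{third term}\ll UA^{-2}V^2\bigl(B^{n/2}+Q^{-\tau}B^{n/2}\max\{Q^n,B^{n/2}\}\bigr)\ll U^{-1}\bigl(B^{n/2}+Q^{-\tau}B^{n}\bigr),
\]
which is dominated by the first term $B^n/A\asymp B^n(\log B)^2/(PQ)$ as soon as $Q^{\tau}\ge Q$, i.e.\ $\tau\ge 1$; this is guaranteed by \eqref{etauconstraints} since $4n/(n+2)>1$.

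It remains to optimise
\[
N(B)\ll \frac{B^n(\log B)^2}{PQ}+Q^nB^{n/2}+P^{n-\tau}B^{n/2}.
\]
Writing $P=B^{\alpha}$ and $Q=B^{\beta}$, I would equate all three terms. From $Q^n=P^{n-\tau}$ we get $\beta=\alpha(n-\tau)/n$. Then balancing $n-\alpha-\beta=n/2+n\beta$ gives
\[
\alpha=\frac{n^2}{2\bigl(n(n+2)-\tau(n+1)\bigr)},
\]
which is exactly where the denominator in the claimed exponent $2(n(n+2)-\tau(n+1))$ comes from. Substituting back yields the stated saving over $B^{n-1}$. The lower bound $\tau>4n/(n+2)$ in \eqref{etauconstraints} is precisely what makes this saving positive, and the upper bound on $\tau$ should be what guarantees the side conditions $\alpha\ge1$ (needed for the max in Proposition~\ref{pPoisson} to be $q_1^{n/2}$), $2\beta\le 1$ (so that $q_2\le B$), and $\beta>0$ with $\alpha>\beta$. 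I expect the main obstacle to be this bookkeeping. First, one must check that the chosen $P,Q$ satisfy every hypothesis at once, including the requirement that the maximum in Proposition~\ref{pPoisson} is attained by the assumed branch; otherwise the other branch $B^{n/2}$ must be used and the optimisation redone. Second, one must handle the coprimality and degenerate contributions (such as $m$ divisible by some $v$) without losing more than the $(\log B)^2$ factor.
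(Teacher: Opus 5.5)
Your proposal is correct and is essentially the paper's proof. Your $P$ and $Q$ play the roles of the paper's $U_0=Q^{\alpha}$ and $V_0=Q^{1-\alpha}$, you apply Proposition~\ref{pPoisson} with $(q_1,q_2)=(uu',vv')$ and $(vv',1)$ exactly as the paper does, and balancing the three terms reproduces the paper's parameters, with $PQ=B^{\delta}$.

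There is one slip in the side conditions. You impose $P\ge B$ and attribute it to the upper bound on $\tau$, but it actually fails whenever $\tau<n(n+4)/(2(n+1))$; for example, $n=7$, $\tau=4$ gives $P=B^{49/62}$. The condition is not needed: Proposition~\ref{pPoisson} only requires $q_1=uu'\ge B$, i.e.\ $P\ge B^{1/2}$, and your exponent of $P$ always exceeds $(n+2)/(2n)>1/2$. Similarly, the diagonal $v=v'$ needs no special care, since the proposition applies directly with $q_2=v^2\le B$.
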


\begin{proof}
Let $1 < \del < \frac32$ and $\frac12 < \alp < 1$ be parameters to be chosen later in terms of $n$ and $\tau$, and introduce
\[
Q := B^\del, \qquad U_0 := Q^\alp, \qquad V_0 := Q^{1-\alp}.
\]
Define 
\begin{align*}
\mathcal{U} &:= \{p \text{ prime} : U_0/2 \leq p \leq U_0 \}, \\
\mathcal{V} &:= \{p \text{ prime} : V_0/2 \leq p \leq V_0 \}.
\end{align*}

Recall that $\Delta(f)$ is a polynomial of degree $2n-2$ in the coefficients of $f$. Hence $\Delta(f) \ll_n B^{2n-2}$ for any $f \in \cM_n(\bZ)$ with $|f| \le B$. Thus, one may verify that the conditions of Lemma~\ref{lSquareSieve} are satisfied for $B$ sufficiently large, and we get 
\begin{equation*}
N(B) \le \sum_{m \ne 0} \ome_B(m^2) \ll_{n, \alp} \Sigma_T + \Sigma_M + \Sigma_P,
\end{equation*}
where
\begin{align*}
\Sigma_T &:= B^n Q^{-1} (\log{Q})^2, \\ 
\Sigma_M &:= \frac{(\log{Q})^4}{Q^2} \sum_{v, v' \in \mathcal{V}} \sum_{u \neq u' \in \mathcal{U}} \left| \sum_{f \in \cM_n(\bZ)} w_B(f) \chi_{uv}(\Del(f)) \chi_{u'v'}(\Del(f)) \right|, \\
\Sigma_P &:= \frac{(\log{Q})^3}{Q^{2 - \alpha}} \sum_{v \neq v' \in \mathcal{V}} \left| \sum_{f \in \cM_n(\bZ)} w_B(f) \chi_v(\Del(f)) \chi_{v'}(\Del(f)) \right|
\end{align*}
are the trivial leading, main sieve and prime sieve terms, respectively. 

We handle $\Sigma_M$ and $\Sigma_P$ using Proposition~\ref{pPoisson}. For $\Sigma_M$, choosing
\[
q_1 = uu' \asymp Q^{2\alpha}, \qquad
q_2 = vv' \asymp Q^{2-2\alpha}
\]
in Proposition~\ref{pPoisson}, we obtain 
$$
\Sigma_M \ll \max_{q_1, q_2} |T(q_1, q_2)| \ll (Q^{(1-\alpha)n} + Q^{\alpha(n-\tau)})B^{n/2}
$$
as long as $2\delta(1-\alpha) \le 1$ (so that $B \ge q_2$). We will assume this from now on.

For $\Sigma_P$, take 
\[
q_1 = vv' \asymp Q^{2-2\alpha}, \qquad
q_2 = 1.
\]
Proposition~\ref{pPoisson} then yields
\[
\Sigma_P \ll \frac{\log{Q}}{Q^{\alpha}} \max_{q_1, q_2} |T(q_1, q_2)| \ll \frac{\log{Q}}{Q^{\alpha}} \Big( B^{n/2} + \frac{B^n}{Q^{(1-\alpha)\tau}} \Big),
\]
since $B \geq q_1$ by the assumption $2\delta(1-\alpha) \leq 1$.

Since $\tau > 1$, we have $\alpha + (1-\alpha)\tau > 1$, so that
\begin{align}
N(B) 
\label{eNB2}
&\ll (B^{n/2}Q^{-1} + Q^{(1-\alpha)n} + Q^{\alpha(n-\tau)}) B^{n/2} (\log{B})^2,
\end{align}
and we want to minimise the right-hand side subject to the conditions $1 < \delta < 3/2$ and $1/2 < \alpha < 1$ and $2\delta(1-\alpha) \leq 1$.
Minimising over $\alpha$ corresponds to balancing the second and third term in \eqref{eNB2}, which happens when $\alpha = n/(2n-\tau)$, so that
\begin{equation}
\label{eNB3}
    N(B) \ll (B^{n/2}Q^{-1} + Q^{n(n-\tau)/(2n-\tau)}) B^{n/2} (\log{B})^2.
\end{equation}
Balancing the remaining terms yields
\begin{equation}
\label{edelta}
    \delta = \frac{n(2 n - \tau)}{2 (n (n+2) - \tau (n+1))} = 1 + \frac{n(\tau-4)+2\tau}{2(n(n+2)-\tau(n+1))}.
\end{equation}
By \eqref{etauconstraints}, it is now easy to check that
\[
1/2 < \alpha < 1,
\qquad
1<\delta<3/2,
\qquad
2\delta(1-\alpha) \leq 1.
\]
Plugging \eqref{edelta} into \eqref{eNB3} concludes the proof.
\end{proof}
\section{Proof of Proposition \ref{pPoisson}}
\label{sec:pPoisson}
In order to prove our main theorem, it remains to establish Proposition \ref{pPoisson}, which we will do in this section. Following Heath-Brown and Pierce \cite{HBP2012}, we implement the key procedure of $q$-van der Corput differencing. This has several technical steps and uses harmonic analytic tools, such as Poisson summation.  We carefully explain the details below.  For easy reference, the claim of the proposition is the bound
\[
T(q_1, q_2) \ll_{n,\tau} q_2^{n/2} B^{n/2} + q_1^{-\tau/2} B^{n/2} \max\{ q_1^{n/2}, B^{n/2} \}.
\]

\begin{proof}[Proof of Proposition \ref{pPoisson}]
Set $H := \lfloor B/q_2 \rfloor$ and $\mathcal{H} := \bZ^n \cap [1, H]^n$, so $|\mathcal{H}| = H^n$.  (We emphasize this difference between $H$ and $B$, as $H$ is often used as the height in other papers.)  Then
\begin{align*}
H^n T(q_1, q_2) &= \sum_{h \in \mathcal{H}} \sum_{f \in \bZ^n} w_B(f + q_2 h) \chi_{q_1}(\Delta(f + q_2h)) \chi_{q_2}(\Delta(f + q_2h)) \\
&= \sum_{f \in \mathcal{I}} \chi_{q_2}(\Delta(f)) \sum_{h \in \mathcal{H}} w_B(f + q_2 h) \chi_{q_1}(\Delta(f + q_2h)),
\end{align*}
where we may restrict the outer sum to $\mathcal{I} := [-4B, 4B]^n$ by looking at the support of our weight $w_B$. Hence the Cauchy--Schwarz inequality yields
\begin{equation}
\label{eTCauchySchwarz}
H^{2n} T(q_1, q_2)^2 \ll_n B^n \sum_{f \in \mathcal{I}} \left| \sum_{h \in \mathcal{H}} w_B(f + q_2 h) \chi_{q_1}(\Delta(f + q_2h)) \right|^2.
\end{equation}
We re-extend the summation to all $f \in \bZ^n$. After expanding the square and interchanging sums, the iterated sum on the right-hand side of \eqref{eTCauchySchwarz} then becomes
$$
\sum_{h_1, h_2 \in \mathcal{H}} \sum_{f \in \bZ^n} w_B(f + q_2 h_1) w_B(f + q_2 h_2) \chi_{q_1}(\Delta(f + q_2h_1)) \chi_{q_1}(\Delta(f + q_2h_2)).
$$
Calling the inner sum $S(h_1, h_2)$, we see that $S(h_1, h_2) = S(h_1 - h_2, 0)$. Hence 
$$
\left|
\sum_{h_1, h_2 \in \mathcal{H}} S(h_1, h_2)
\right|
= 
\left|
\sum_{h_1, h_2 \in \mathcal{H}} S(h_1 - h_2, 0)
\right|
\leq H^n \sum_{h \in \mathcal{H}_0} |S(h, 0)|, 
$$
where $\mathcal{H}_0 := [-H, H]^n$. 
Note that $S(0,0) \ll_n B^n$. Combining this with \eqref{eTCauchySchwarz} yields
\begin{align}
\label{eTBound}
T(q_1, q_2) &\ll H^{-n} B^{n/2} \left(H^n B^n + H^n \sum_{0 \ne h \in \mathcal{H}_0} |S(h, 0)|\right)^{1/2} \nonumber \\
&\ll H^{-n/2} B^n +  H^{-n/2} B^{n/2} \left(\sum_{0 \ne h \in \mathcal{H}_0} |S(h, 0)|\right)^{1/2}.
\end{align}

The first term above becomes the first term in Proposition \ref{pPoisson}.  It remains to bound the sum in \eqref{eTBound}, which leads to the second term in Proposition \ref{pPoisson}. From now on, we regard $q_2$ as fixed and will suppress it in our notation as much as possible. We write
\begin{equation*}
W(f) = W_{B,h}(f) = w_B(f) w_B(f + q_2h)
\end{equation*}
and
\begin{equation*}
t(f, h) = \chi_{q_1}(\Delta(f)) \chi_{q_1}(\Delta(f + q_2h)),
\end{equation*}
so that
\[
S(h, 0) = \sum_{f \in \bZ^n} t(f, h) W_{B,h}(f).
\]
Write $q=q_1$ for ease of notation, and let us identify $\bZ^n$ with $\cM_n(\bZ)$. 
Splitting into residue classes modulo $q$ and applying Poisson summation gives
\begin{align*}
S(h, 0) &= \sum_{g \in \cM_n(\bZ/q\bZ)} t(g, h) \sum_{G \in \cM_n(\bZ)} W_{B,h}(g + qG) \\
&= q^{-n} \sum_{g \in \cM_n(\bZ/q\bZ)} t(g, h) \sum_{\fG \in \cM_n(\bZ)} \hat W(\fG/q) e_q(g \cdot \fG) \\
&=  \sum_{\fG \in \cM_n(\bZ)} \hat W(\fG/q) \bE_{g \in \cM_n(\bZ/q\bZ)} t(g, h) e_q (g \cdot \fG).
\end{align*}

First suppose $q > B$.
By the uncertainty principle and the scaling properties of the Fourier transform, we can model
$
\hat W (\fG / q) \approx B^n 1_{|\fG| \ll q/B}.
$
Indeed, repeated integration by parts furnishes
\[
\hat W(f) \ll_n
B^n (1 +  B \| f \|_\infty)^{-n-1}
\qquad (f \in \bZ^n),
\]
which is \cite[Equation (29)]{HBP2012} at a different scale. To exploit this, let $j \in \bN$ be minimal such that $|\fG| \le 2^j q/B$. 
To proceed, we introduce the following notation. If $c$ is an element of a ring $\cR$, and $g \in \cM_n(\cR)$ is given by
\[
g(X) = X^n + b_1 X^{n-1} + \cdots + b_n,
\]
then
\[
c \star g (X) := X^n + c(b_1 X^{n-1} + \cdots + b_n).
\]
We also define an abelian group operation $\oplus$ on $\cM_n(\cR)$ by pointwise addition of the non-leading coefficients. 
By Sunzi's remainder theorem,
\begin{align*}
& \Bigl|
\mathop{\bE}_{g \in \cM_n(\bZ/q\bZ)} t(g, h) e_q(g \cdot \fG) \Bigr|
\\ &= 
\Bigl|
\mathop{\bE}_{\substack{g_1 \in \cM_n(\bF_p) \\ g_2 \in \cM_n(\bF_{p'})}} t(p' \star g_1 \oplus p \star g_2, h) e_q((p' \star g_1 \oplus p \star g_2) \cdot \fG)
\Bigr|
\\
&= \Bigl|
\mathop{\bE}_{\substack{g_1 \in \cM_n(\bF_p)}} t_p(p' \star g_1, h) e_p(p' \star g_1 \cdot \fG) \Bigr| 
\cdot
\Bigl|
\mathop{\bE}_{\substack{g_2 \in \cM_n(\bF_{p'})}} t_{p'} (p \star g_2, h) e_{p'}(p \star g_2 \cdot \fG)\Bigr|,
\end{align*}
where $t_p(f,h) = \chi_p(\Del(f)) \chi_p(\Del(f + q_2 h))$, and similarly for $t_{p'}$.
Now note, for example, that if $h \notin p \bZ^n$ then
\begin{align*}
&\Bigl|
\mathop{\bE}_{\substack{g_1 \in \cM_n(\bF_p)}} t_p(p' \star g_1, h) e_p(p' \star g_1 \cdot \fG) \Bigr| 
= \Bigl| \mathop{\bE}_{\substack{g \in \cM_n(\bF_p)}} t_p(g, h) e_p(g \cdot \fG) \Bigr| \\
& \leq \sup_{\substack{0 \ne h \in \cA_{\le n - 1}(\bF_p) \\ \tet \in \cM_n(\bF_p)}} \Bigl|  \bE_{g \in \cM_n(\bF_p)} \, \mu_p(g) \mu_p(g+h) e_p(g \cdot \tet) \Bigr|,
\end{align*}
where we used Pellet's formula in the last step. Applying $\TC(n,\tau)$, we compute that
\begin{align*}
&\sum_{0 \ne h \in \cH_0} |S(h,0)| \\
&\ll \sum_{j=1}^\infty
(2^j q / B)^n B^n (2^j)^{-n-1} (H^n q^{-\tau} + (H/p)^n (p')^{-\tau} + (H/p')^n p^{-\tau}
+ (H/q)^n)
\\
&\ll q^{n-\tau} H^n.
\end{align*}
Combining this with \eqref{eTBound} completes the proof in this case.

If $q \le B$ then, similarly,
\begin{align*}
\sum_{0 \ne h \in \cH_0} |S(h,0)|
&\ll \left(
B^n + 
\sum_{j=1}^\infty (2^j q / B)^n B^n (2^j)^{-n-1} \right) \\
&\qquad
\cdot (H^n q^{-\tau} + (H/p)^n (p')^{-\tau} + (H/p')^n p^{-\tau}
+ (H/q)^n) \\
&\ll B^n q^{-\tau} H^n,
\end{align*}
as was to be shown.
\end{proof}
\section{The non-monic setting}
\label{nm}

We now discuss the analogous problem in which the polynomials are not assumed to be monic. Again, the degree $n \ge 3$ is fixed. Let $\tilde E_n(H)$ count irreducible polynomials
\[
a_0 X^n + a_1 X^{n-1} + \cdots + a_n \in \bZ[X]
\]
with $a_0 \ne 0$ such that $|a_j| \le H$ for all $j$ and $G_f \not \cong S_n$. In this context, the strong Van der Waerden conjecture asserts that
$$
\tilde E_n(H) = o(H^n)
\qquad (H \to \infty).
$$
Let $\tilde E_n^\dag(H)$ count irreducible polynomials
\[
a_0 X^n + a_1 X^{n-1} + \cdots + a_n \in \bZ[X]
\]
with $a_0 \ne 0$ such that $|a_j| \le H$ for all $j$ and $G_f \not \ge A_n$. The arguments leading to \cite[Corollary 3]{Bha2025} also give
\[
\tilde E_n^\dag(H) \ll_n H^{n-1}
\qquad (n \ge 10).
\]
This reduces the non-monic version of the strong Van der Waerden's conjecture to counting polynomials with square discriminant, when $n \ge 10$. 
In principle, existing methods should achieve this reduction for smaller values of $n$ as well. Among other things, this would require the development of a subtle and general resolvent theory akin to \cite[\S 4]{CD2023}. For brevity, we do not pursue this in the present article.

Let us move onto the main business of this section. Below, we write $g \cdot \tet$ for the dot product of the coefficient vectors.

\begin{defn}
Fix $n \in \bN$ and $\tau > 0$. Then $\widetilde{\TC}(n,\tau)$ is the statement that, for all sufficiently large primes $p$,
\begin{align*}
&\displaystyle
\sup_{\substack{0 \ne a_0 \in \bF_p \\ 0 \ne h \in \cA_{\le n - 1}(\bF_p) \\ \tet \in \cA_{\le n-1}(\bF_p)}} \left|
\mathop{\bE}_{g \in \cA_{\le n-1}(\bF_p)} \, \mu_p(a_0 X^n + g) \mu_p(a_0X^n + g+h) e_p(g \cdot \tet)
\right| 
\ll_{n,\tau} p^{-\tau}.
\end{align*}
\end{defn}

\begin{thm}
\label{MainNM}
Let $n \ge 3$ be an integer, and suppose $\widetilde{\TC}(n,\tau)$ holds for some
\[
\tau > \frac{4n}{n+2}.
\]
Then
$
\tilde E_n(H) = \tilde E_n^\dag(H) + o(H^n),
$
as $H \to \infty$.
In particular, if also $n \ge 10$, then
\[
\tilde E_n(H) =  o(H^n)
\qquad (H \to \infty).
\]
\end{thm}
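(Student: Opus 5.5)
We adapt the monic argument, with the leading coefficient $a_0$ treated as an outer parameter. Polynomials counted by $\tilde E_n(H) - \tilde E_n^\dag(H)$ are irreducible with $G_f \cong A_n$, so their discriminants are nonzero squares. It therefore suffices to show that
\[
\tilde N(H) := \#\{ (a_0, g) : 0 < |a_0| \le H,\ g \in \cA_{\le n-1}(\bZ),\ |g| \le H,\ 0 \ne \Del(a_0X^n + g) = \square \}
\]
is $o(H^n)$. The ``in particular'' clause then follows from the bound $\tilde E_n^\dag(H) \ll H^{n-1}$ for $n \ge 10$ recalled above.

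We fix $a_0$ and aim to prove, uniformly in $0 \ne a_0 \ll H$,
\[
\tilde N_{a_0}(H) := \#\{ g : |g|\le H,\ 0 \ne \Del(a_0X^n+g) = \square\} \ll H^{n-1-\eta}
\]
for some $\eta = \eta(n,\tau) > 0$. Summing over $a_0$ then gives $\tilde N(H) \ll H^{n-\eta} = o(H^n)$. For fixed $a_0$, the discriminant is a polynomial in the $n$ variables $a_1,\dots,a_n$ of size $\ll H^{2n-2}$, since $|a_0|\le H$ keeps everything polynomially bounded. So Lemma~\ref{lSquareSieve} applies verbatim to
\[
\omega(m) = \sum_{g : \Del(a_0X^n+g)=m} w_H(g),
\]
with the same $\kappa$ as before. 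We then run the proof of Theorem~\ref{SquareVersion} word for word, with the same parameter choices $Q = H^\delta$, $\alpha = n/(2n-\tau)$ and $\delta$ as in \eqref{edelta}. This needs only the analogue of Proposition~\ref{pPoisson} with $\Del(f)$ replaced by $\Del(a_0X^n+g)$, and with constants uniform in $a_0$. Note that $\tau > 4n/(n+2)$ is exactly what makes the exponent positive. If $\tau$ exceeds the upper limit in \eqref{etauconstraints}, we use the fact that $\widetilde{\TC}(n,\tau)$ implies $\widetilde{\TC}(n,\tau')$ for every smaller $\tau'$, so we may shrink $\tau$ into range.

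The analogue of Proposition~\ref{pPoisson} is proved exactly as in \S\ref{sec:pPoisson}. We use the shift $g \mapsto g + q_2h$ and Cauchy--Schwarz, then the translation invariance $S(h_1,h_2) = S(h_1-h_2,0)$, which holds because $a_0$ is untouched by the shift. Poisson summation in the $n$ variables of $g$ modulo $q = pp'$ follows, with the same Fourier decay of $\hat W$. We then factor by the Chinese remainder theorem, where the twist by $p'$ acts as $g \mapsto p'g$. This changes the leading coefficient relation: $\Del(a_0X^n + p'g)$ is not directly of the form $\Del(a_0X^n + g')$ with the same normalisation. Instead, we absorb the factor by substituting $g \mapsto (p')^{-1}g$ modulo $p$ in the complete sum. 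Then $a_0X^n + g$ is evaluated with $a_0$ unchanged, and the phase becomes $e_p(g\cdot (p')^{-1}\fG)$, which is another admissible $\tet$. Since the supremum in $\widetilde{\TC}$ ranges over all $a_0 \ne 0$, all $h \ne 0$ and all $\tet$, this causes no loss.

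Pellet's formula \eqref{Pellet} is needed for non-monic $a_0X^n+g$ with $p \nmid a_0$. It holds in the form
\[
\mu_p(F) = \epsilon \left(\frac{\Del(F)}{p}\right),
\]
where $\epsilon = \pm1$ is a sign depending only on $n$ and $\bigl(\frac{a_0}{p}\bigr)$; this is constant in $g$, so it disappears inside absolute values. Here $\mu_p$ of a non-monic polynomial means $\mu_p$ of its monic normalisation. The main obstacle is the set of exceptional primes: those dividing $a_0$, and, as before, those $h \in p\bZ^n$. For primes $p \mid a_0$ the reduction drops degree and $\widetilde{\TC}$ does not apply. We handle this by discarding from $\mathcal U, \mathcal V$ the at most $O(\log H)$ primes dividing $a_0$ before sieving. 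This is harmless, because $\mathcal U$ and $\mathcal V$ have size $\gg Q^{\alpha}/\log Q$ and $Q^{1-\alpha}/\log Q$, and the hypothesis of Lemma~\ref{lSquareSieve} is preserved. The $h \equiv 0 \pmod p$ terms are bounded exactly as in the monic case, by the $(H/p)^n(p')^{-\tau}$-type contributions. With these modifications the bound $T \ll q_2^{n/2}H^{n/2} + q_1^{-\tau/2}H^{n/2}\max\{q_1^{n/2},H^{n/2}\}$ holds uniformly in $a_0$, which completes the argument.
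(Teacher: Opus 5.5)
Your proposal is correct and follows essentially the same route as the paper: reduce to counting nonzero square discriminants, fix $a_0$ and remove the primes dividing $a_0$ from $\mathcal{U},\mathcal{V}$, run the square-sieve argument of Theorem~\ref{SquareVersion}, and prove the analogue Proposition~\ref{pPoissonNM} by $q$-van der Corput differencing, Poisson summation, the Chinese remainder theorem, Pellet's formula and $\widetilde{\TC}(n,\tau)$. You also spell out two points the paper leaves implicit: uniformity in $a_0$, which is needed when summing over $a_0$, and the $(p')^{-1}$ rescaling of the phase in the Chinese remainder step.
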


The key proposition is the following variant of Proposition \ref{pPoisson}, which uses the same weight $w_B: \bR^n \to [0,\infty)$. 

\begin{prop}
\label{pPoissonNM}
Let $\tau \in (0,n]$ be such that $\widetilde{\TC}(n, \tau)$ holds, and let $q_1, q_2 \in \bN$ be coprime with $q_2 \le B$. Assume that $q_1$ is a product of two primes $pp'$ satisfying $p < p' < 2p$. Let $a_0 \in \bZ$ with $0 < |a_0| \le B$ and $(q_1, a_0) = 1$.
Define
$$
T(q_1, q_2; a_0) = \sum_{f \in \cA_{\le n-1}(\bZ)} w_B(f) \chi_{q_1 q_2}(\Del(a_0X^n + f)),
$$
where $\chi_{q_1 q_2}$ is the Jacobi symbol. Then
$$
T(q_1, q_2; a_0) \ll_{n.\tau} q_2^{n/2} B^{n/2} + q_1^{-\tau/2} B^{n/2} \max\{q_1^{n/2}, B^{n/2}\}.
$$
\end{prop}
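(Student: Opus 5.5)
We follow the proof of Proposition~\ref{pPoisson} essentially line by line, now with the polynomial $a_0X^n + f$ and $f$ ranging over $\cA_{\le n-1}(\bZ) \cong \bZ^n$. Since $q_1$ and $q_2$ are coprime, we split $\chi_{q_1q_2} = \chi_{q_1}\chi_{q_2}$. Put $H := \lfloor B/q_2 \rfloor$ and $\cH := \bZ^n \cap [1,H]^n$, and average over shifts $f \mapsto f + q_2 h$ with $h \in \cH$. The factor $\chi_{q_2}(\Del(a_0X^n + f + q_2h))$ equals $\chi_{q_2}(\Del(a_0X^n+f))$, because $\Del$ has integer coefficients. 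Hence Cauchy--Schwarz over $f \in [-4B,4B]^n$ yields the analogue of \eqref{eTCauchySchwarz}. After expanding the square, the inner sum is
\[
S(h,0) = \sum_{f} W_{B,h}(f)\, \chi_{q_1}(\Del(a_0X^n+f))\,\chi_{q_1}(\Del(a_0X^n+f+q_2h)).
\]
It is translation invariant in $(h_1,h_2)$, so we get exactly \eqref{eTBound}. The diagonal term $h=0$ contributes $H^{-n/2}B^n \ll q_2^{n/2}B^{n/2}$, which is the first term of the claimed bound.

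For $h \neq 0$, set $q = q_1 = pp'$ and apply Poisson summation on $\cA_{\le n-1}(\bZ)$ modulo $q$. This is ordinary Poisson summation on $\bZ^n$, with the same bound $\hat W(\fG/q) \ll B^n(1 + B\|\fG/q\|_\infty)^{-n-1}$. It gives
\[
S(h,0) = \sum_{\fG \in \bZ^n} \hat W(\fG/q)\, \bE_{g \in \cA_{\le n-1}(\bZ/q\bZ)} t(g,h) e_q(g\cdot \fG).
\]
By the Chinese remainder theorem, writing $g = p'g_1 + pg_2$, the complete sum factors as a product of a sum modulo $p$ and a sum modulo $p'$. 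Here there is no need for the $\star$ operation, because all coefficients are free. In each local factor we substitute $g_1 \mapsto p'^{-1}g_1$. This rescales the frequency $\fG$, but the supremum in $\widetilde{\TC}$ is over all $\tet$, so that does not matter. Since $(a_0,q_1) = 1$, the leading coefficient $a_0$ is nonzero modulo $p$. Then $a_0X^n + g$ has degree $n$ over $\bF_p$, so Pellet's formula \eqref{Pellet} converts the product of Legendre symbols into $\mu_p(a_0X^n+g)\mu_p(a_0X^n+g+q_2h)$. The sign factors $(-1)^n$ cancel in the product. If $q_2h \not\equiv 0 \pmod p$, which holds whenever $h \notin p\bZ^n$ (as $p \nmid q_2$), then $\widetilde{\TC}(n,\tau)$ bounds the local factor by $\ll p^{-\tau}$. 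If $h \in p\bZ^n$, we bound it trivially by $1$.

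The rest is identical to the proof of Proposition~\ref{pPoisson}. We sum over dyadic shells $|\fG| \le 2^j q/B$ and count the $h \in \cH_0$ according to their divisibility by $p$ and $p'$, giving $H^n q^{-\tau} + (H/p)^n p'^{-\tau} + (H/p')^n p^{-\tau} + (H/q)^n$. Since $p \asymp p'$ and $\tau \le n$, this is $\ll H^n q^{-\tau}$. We then treat the two cases. If $q > B$, we get $\sum_{h\neq 0}|S(h,0)| \ll q^{n-\tau}H^n$. If $q \le B$, we get $\ll B^nq^{-\tau}H^n$. Inserting either bound into \eqref{eTBound} gives the second term $q_1^{-\tau/2}B^{n/2}\max\{q_1^{n/2},B^{n/2}\}$.

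The main point requiring care is the passage to $\widetilde{\TC}$. One must check that the Chinese remainder theorem rescaling keeps the leading coefficient fixed, which it does because the leading coefficient is excluded from the summation variable. One must also check that the invertibility of $a_0$ modulo $p$ and $p'$ is precisely what makes Pellet's formula applicable in degree $n$. This is the only place where the hypothesis $(q_1,a_0) = 1$ is used. The hypothesis $|a_0| \le B$ plays no role here beyond bookkeeping.
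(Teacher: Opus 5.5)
Your proposal is correct and follows the paper's proof essentially step for step. It uses the same differencing by shifts $q_2h$, Poisson summation modulo $q_1$, and factorisation via the Chinese remainder theorem, and it uses $(q_1,a_0)=1$ in the same way: to fix the degree $n$ modulo $p$ and $p'$, so that Pellet's formula turns the local sums into the correlations controlled by $\widetilde{\TC}(n,\tau)$. Your explicit remarks that the substitution by $p'^{-1}$ only changes the frequency (harmless, since the supremum is over all $\theta$) and that no $\star$-operation is needed are, if anything, slightly cleaner than the paper's write-up.
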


\begin{proof}
[Proof of Theorem \ref{MainNM} assuming Proposition \ref{pPoissonNM}]
Define
\[
N(B) = \sum_{0 < |a_0| \le B} N(B; a_0),
\]
where
$
N(B; a_0) = \# \{ f \in \cA_{\le n-1}(\bZ): |f| \le B, \ \ 
0 \ne \Del(a_0 X^n + f) = \square \}.
$
Since $A_n$ polynomials have square discriminant, it suffices to prove that $N(B) = o(B^n)$.
Note that $\widetilde{\TC}(n,\tau)$ holds for some $\tau$ in the range \eqref{etauconstraints}.
We imitate the
proof of Theorem \ref{SquareVersion}, applying the square sieve with
\begin{align*}
\cU &= \{ p \nmid a_0 \text{ prime}: U_0/2 \le p \le U_0 \},
\\
\cV &= \{ p \nmid a_0 \text{ prime}: V_0/2 \le p \le V_0 \}.
\end{align*}
Applying Proposition \ref{pPoissonNM} in lieu of Proposition \ref{pPoisson}, we conclude that
$$
N(B;a_0) = o(B^{n-1}).
$$
Summing over $a_0$ completes the proof.
\end{proof}

\begin{proof}
[Proof of Proposition \ref{pPoissonNM}]
We mimic the proof of Proposition \ref{pPoisson}. This gives
\[
T(q_1, q_2; a_0) \ll H^{-n/2} B^n + H^{-n/2} B^{n/2} \left(
\sum_{0 \ne h \in \cH_0} |S(h,0; a_0)| \right)^{1/2}
\]
where, with $q = q_1$,
\[
S(h,0; a_0) = \sum_{\fG \in \cA_{\le n-1}(\bZ)} \hat W(\fG/q) \mathop{\bE}_{g \in \cA_{\le n - 1}(\bZ/q\bZ)} t(a_0 X^n + g,h) e_q(g \cdot \fG).
\]
We obtain
\begin{align*}
&\left| \mathop{\bE}_{g \in \cA_{\le n-1}(\bZ/q\bZ) } t(a_0X^n + g,h) e_q(g \cdot \fG) \right| \\ &= \left| \mathop{\bE}_{g \in \cA_{\le n-1} (\bF_p)} t_p(a_0 X^n + g, h) e_p(g \cdot \fG) \right| 
\cdot \left| \mathop{\bE}_{g \in \cA_{\le n-1} (\bF_{p'})} t_{p'}(a_0 X^n + g, h) e_{p'}(g \cdot \fG) \right|.
\end{align*}
Importantly, we have assumed that $(q,a_0) = 1$; this ensures that we know the degree of $a_0 X^n + g$ modulo $q$, and hence fixes $n$ in Pellet's formula \eqref{Pellet}. If $h \notin p \bZ^{n+1}$ then, by Pellet's formula and $\widetilde \TC(n,\tau)$, 
\begin{align*}
\sum_{g \in \cA_{\le n-1}(\bF_p)} t_p(a_0 X^n + g,h) e_p(g \cdot \fG) 
&\ll p^{n-\tau},
\end{align*}
and similarly for the other factor. The proof may then be completed in the same way as that of Proposition \ref{pPoisson}.
\end{proof}

\providecommand{\bysame}{\leavevmode\hbox to3em{\hrulefill}\thinspace}


\begin{thebibliography}{50}

\bibitem{Alb2020}
B. Alberts, \emph{The weak form of Malle's conjecture and solvable groups}, Res. Number Theory (2020) 6:10, 23 pp.

\bibitem{ALOWW}
B. Alberts, R. J. Lemke Oliver, J. Wang and M. M. Wood, \emph{Inductive methods for counting number fields}, arXiv:2501.18574.

\bibitem{Eight2024}
T. C. Anderson, A. Gafni, K. Hughes, R. J. Lemke Oliver, D. Lowry-Duda, F. Thorne,
J. Wang and R. Zhang, \emph{Improved bounds on number fields of small
degree}, Discrete Anal. \textbf{2024,} Paper No. 19, 24 pp.

\bibitem{Six2023}
T. C. Anderson, A. Gafni,
R. J. Lemke Oliver,
D. Lowry-Duda, G. Shakan and
R. Zhang, \emph{Quantitative Hilbert Irreducibility and Almost Prime Values of Polynomial Discriminants},
Int. Math. Res. Not. \textbf{2023,} 2188--2214.

\bibitem{AOD}
T. C. Anderson and E. M. O’Dorney,
\emph{Galois groups of random integer matrices}, 
Math. Proc. Cambridge Philos. Soc., to appear, arXiv:2506.06463. 

\bibitem{BBM}
L. Bary-Soroker, O. Ben-Porath and V. Matei, \emph{Probabilistic Galois Theory -- The Square Discriminant Case}, Bull. Lond. Math. Soc. 
\textbf{56} (2024), 2162--2177.

\bibitem{BSEM2024}
L. Bary-Soroker, E. Entin and E. McKemmie, \emph{Galois groups of random additive polynomials}, Trans. Amer. Math. Soc. \textbf{377} (2024), 2231--2259.

\bibitem{BSG2025}
L. Bary-Soroker and N. Goldgraber,
\emph{Full Galois groups of polynomials with slowly growing coefficients}, Bull. Lond. Math. Soc. 
\textbf{57} (2025), 941--955.

\bibitem{BSHKP}
L. Bary-Soroker, D. Hokken, G. Kozma and B. Poonen, \emph{Irreducibility of Littlewood polynomials of special degree}, arXiv:2308.04878.

\bibitem{BSKK2023}
L. Bary-Soroker, D. Koukoulopoulos and G.  Kozma, \emph{Irreducibility of random polynomials: general measures}, Invent. Math. \textbf{233} (2023), 1041--1120.

\bibitem{BSK2020}
L. Bary-Soroker and G. Kozma, \emph{Irreducible polynomials of bounded height}, Duke Math. J. \textbf{169} (2020), 579--598.

\bibitem{Bha2005}
M. Bhargava, \emph{The density of discriminants of quartic rings and fields}, Ann. of Math. (2) \textbf{162} (2005), 1031--1063.

\bibitem{Bha2010}
M. Bhargava, \emph{The density of discriminants
of quintic rings and fields}, Ann. of Math. (2) \textbf{172} (2010), 1559--1591.

\bibitem{Bha2025}
M. Bhargava, \emph{Galois groups of random integer polynomials and van der Waerden’s Conjecture}, Ann. of Math. \textbf{201} (2025), 339--377.

\bibitem{BSTTTZ2020}
M. Bhargava, A. Shankar, T. Taniguchi, F. Thorne, J. Tsimerman and Y. Zhao, \emph{Bounds on $2$-torsion in class groups of number fields and integral points on elliptic curves}, J. Amer. Math. Soc. \textbf{33} (2020), 1087--1099.

\bibitem{BSW2022}
M. Bhargava, A. Shankar and X. Wang, \emph{An improvement on Schmidt’s bound on the number of
number fields of bounded discriminant and small degree}, Forum Math. Sigma \textbf{10} (2022), Paper No. e86, 13 pp.

\bibitem{BV2019}
E. Breuillard and P. Varj\'u, \emph{Irreducibility of random polynomials of large degree}, Acta Math. \textbf{223} (2019), 195--249.

\bibitem{BM1983}
G. Butler and J. McKay, \emph{The transitive groups of degree up to eleven}, Comm. Algebra \textbf{11} (1983), 863--911.

\bibitem{CR2014}
D. Carmon and Z. Rudnick, \emph{The autocorrelation of the M\"obius function and Chowla's conjecture for the rational function field}, Quart. J. Math. \textbf{65} (2014), 53--61.

\bibitem{CD2020} 
S. Chow and R. Dietmann, \emph{Enumerative Galois theory for cubics and quartics}, Adv. Math. \textbf{372} (2020).

\bibitem{CD2023} 
S. Chow and R. Dietmann, \emph{Towards van der Waerden's conjecture}, Trans. Amer. Math. Soc. \textbf{376} (2023), 2739--2785.

\bibitem{CD}
S. Chow and R. Dietmann, \emph{Enumerative Galois theory for number fields}, Mathematika (special Wooley issue), to appear, arXiv:2304.11991.

\bibitem{Cox}
D. A. Cox, \emph{Galois theory}, second edition, Wiley and Sons, Hoboken, NJ, 2012.

\bibitem{DH1971} H. Davenport and H. Heilbronn, \emph{On the density of discriminants of cubic fields, II}, Proc. Roy. Soc. London Ser. A \textbf{322} (1971), 405--420.

\bibitem{Die2013}
R. Dietmann, \emph{Probabilistic Galois theory}, Bull. Lond. Math. Soc. \textbf{45} (2013), 453--462. 

\bibitem{Dum2018}
E. Dummit, \emph{Counting $G$-extensions by discriminant}, Math. Res. Lett.
\textbf{25} (2018), 1151--1172.

\bibitem{Ebe2022}
S. Eberhard, \emph{The characteristic polynomial of a random matrix}, Combinatorica \textbf{42} (2022), 491--527.

\bibitem{Ent2025}
A. Entin, \emph{Galois groups of random polynomials over the rational function field}, J. London Math. Soc. (2) \textbf{111} (2025), e70061. 

\bibitem{EP2024}
A. Entin and A. Popov, \emph{Probabilistic Galois theory in function fields},
Finite Fields Appl. \textbf{98} (2024), 102466.

\bibitem{FJSS2023}
A. Ferber, V. Jain, A. Sah and M. Sawhney, \emph{Random symmetric matrices: rank distribution and irreducibility of the characteristic polynomial}, Math. Proc. Cambridge Philos. Soc. \textbf{174} (2023), 233--246.

\bibitem{FK}
E. Fouvry and P. Koymans, \emph{Malle’s conjecture for nonic Heisenberg extensions}, Ann. Inst. Fourier (Grenoble), to appear, arXiv:\allowbreak2102.09465.

\bibitem{Gal1973}
P. X. Gallagher, \emph{The large sieve and probabilistic Galois theory}, Analytic number theory (Proc. Sympos. Pure Math., Vol. XXIV, St. Louis Univ., St. Louis, Mo., 1972), pp. 91--101, Amer. Math. Soc., Providence, RI, 1973.

\bibitem{GKL2018}
A. Gomilko, D. Kwietniak and M. Lema\'nczyk, \emph{
Sarnak's conjecture implies the Chowla conjecture along a subsequence}, Ergodic theory and dynamical systems in their interactions with arithmetics and combinatorics, 237–247.
Lecture Notes in Math. \textbf{2213,}
Springer, Cham, 2018.

\bibitem{GS2020}
O. Gorodetsky and W. Sawin, \emph{Correlation of arithmetic functions over $\bF_q[T]$},
Math. Ann. \textbf{376}
(2020), 1059--1106.

\bibitem{HB1978}
D.~R. Heath-Brown, \emph{Hybrid bounds for Dirichlet $L$-functions}, 
Invent. Math. \textbf{47} (1978), no.~2, 149--170.

\bibitem{HBP2012}
D. R. Heath-Brown and L. B. Pierce, \emph{Counting rational points on smooth cyclic covers}, J. Number Theory \textbf{132} (2012), 1741--1757.

\bibitem{KR2015}
J. P. Keating and E. Roditty-Gershon, \emph{Arithmetic correlations over large finite fields}, Int. Math. Res. Not. \textbf{2016,} 860--874.

\bibitem{KR2018}
J. P. Keating and E. Roditty-Gershon, Corrigendum to ``Arithmetic correlations over large finite fields'', Int. Math. Res. Not. \textbf{2020,} 5152--5153.

\bibitem{KM}
O. Klurman and V. Matei, \emph{Irreducibility of polynomials with random multiplicative coefficients revisited}, 	arXiv:2511.10359.

\bibitem{Kno1955}
H.-W. Knobloch, \emph{Zum Hilbertschen Irreduzibilitätssatz},
Abh. Math. Sem. Univ. Hamburg \textbf{19} (1955), 176–190.

\bibitem{KP2023}
P. Koymans and C. Pagano, \emph{On Malle’s conjecture for nilpotent groups}, Trans. Amer. Math. Soc. Ser. B \textbf{10} (2023), 310--354.

\bibitem{LO}
R. J. Lemke Oliver, \emph{Uniform exponent bounds on the number of primitive extensions of number fields}, arXiv:2311.06947.

\bibitem{LOT2022}
R. J. Lemke Oliver and F. Thorne, \emph{Upper bounds on number fields of given degree and bounded discriminant}, Duke Math. J. \textbf{171} (2022), 3077--3087.

\bibitem{LS}
D. Loughran and T. Santens, \emph{Malle's conjecture and Brauer groups of stacks},
arXiv:\allowbreak2412.04196.

\bibitem{Mak1985}
S. M\"aki, \emph{On the density of abelian number fields}, Ann. Acad. Sci. Fenn. Ser. A I Math. Dissertationes \textbf{54} (1985), 104 pp.

\bibitem{Mal2002} G. Malle, \emph{On the distribution of Galois groups}, J. Number Theory \textbf{92} (2002), 315--329.

\bibitem{Mal2004} G. Malle, \emph{On the distribution of Galois groups, II}, Exp. Math. \textbf{13} (2004), 129--135.

\bibitem{MTTW}
R. Masri, F. Thorne, W-L Tsai, and J. Wang, \emph{Malle’s conjecture for $G \times A$ with
$G = S_3, S_4, S_5$}, arXiv: 2004.04651.

\bibitem{OP1993}
A. M. Odlyzko and B. Poonen, \emph{Zeros of polynomials with 0,1 coefficients}, Enseign. Math. (2) \textbf{39} (1993), 317--348.

\bibitem{Pel1878}
A. Pellet, \emph{ Sur la d\'ecomposition d’une fonction enti\`ere en facteurs irr\'eductibles suivant un module premier
$p$}, C. R. Acad. Sci. Paris \textbf{86} (1878), 1071--1072.

\bibitem{Pie2022}
L. B. Pierce, \emph{Counting problems: class groups, primes, and number fields}, Proc. Int. Cong. Math. \textbf{2022,} pp. 1940--1965,
DOI 10.4171/ICM2022/104.

\bibitem{Ram2018}
O. Ramar\'e, \emph{Chowla's conjecture: from the Liouville function to the Moebius function}, Ergodic theory and dynamical systems in their interactions with arithmetics and combinatorics, 317--323.
Lecture Notes in Math. \textbf{2213,}
Springer, Cham, 2018.

\bibitem{Riv2008}
I. Rivin, \emph{Walks on groups, counting reducible matrices, polynomials, and surface and free group automorphisms}, Duke Math. J. \textbf{142} (2008), 353--379.

\bibitem{Sag2025}
SageMath, the Sage Mathematics Software System (Version 10.7),
The Sage Developers, 2025, \url{https://www.sagemath.org}.

\bibitem{VX}
P. P. Varj\'u and M. Xu, \emph{A random polynomial with multiplicative coefficients is almost surely irreducible}, Int. Math. Res. Not., to appear, arXiv:2511.04240.

\bibitem{vdW1936}
B. L. van der Waerden, \emph{Die Seltenheit der reduziblen Gleichungen und die Gleichungen mit Affekt}, Monatsh. Math. \textbf{43} (1936), 137--147.

\bibitem{Wan2021}
J. Wang, \emph{Malle's conjecture for $S_n \times A$ for $n=3,4,5$}, 
Compos. Math. \textbf{157} (2021), 83--121.

\bibitem{Wid2011}
M. Widmer, \emph{On number fields with nontrivial subfields}, Int. J. Number Theory \textbf{7} (2011), 695--720.

\bibitem{Wri1989}
D. J. Wright, \emph{Distribution of discriminants of abelian extensions}, Proc. Lond. Math. Soc. \textbf{3} (1989), 17--50.

\bibitem{Zar2000}
Y. G. Zarhin, \emph{Hyperelliptic Jacobians without complex multiplication}, Math. Res. Lett. \textbf{7} (2000), 123--132.

\bibitem{Zyw2010}
D. Zywina, \emph{Hilbert's irreducibility theorem and the larger sieve},  arXiv:1011.6465.

\end{thebibliography}
\end{document}